\documentclass[a4paper,12pt]{article}
\usepackage[top=1in,left=1in,right=1in,bottom=1.5in]{geometry}
\usepackage{latexsym}
\usepackage{epsfig, ecltree, epic, eepic}
\usepackage{enumerate,amsmath,amssymb,dsfont,pifont,amsthm}
\usepackage{xcolor}
\usepackage{graphicx}
\usepackage{stmaryrd}
\usepackage[arrow, matrix, curve]{xy}
\usepackage{comment}
\usepackage{hyperref}
\hypersetup{
    colorlinks,
    citecolor=black,
    filecolor=black,
    linkcolor=black,
    urlcolor=black
}

\pagestyle{headings}

\setlength{\parskip}{6pt}
\setlength{\parindent}{0pt}
\parindent 0.0cm
\sloppy
\frenchspacing

\theoremstyle{plain}
\newtheorem{theorem}{Theorem}[section]

\newtheorem{lemma}[theorem]{Lemma}
\newtheorem{proposition}[theorem]{Proposition}

\theoremstyle{definition}
\newtheorem{definition}[theorem]{Definition}
\newtheorem{notation}[theorem]{Notation}

\theoremstyle{remark}
\newtheorem{remark}[theorem]{Remark}
\newtheorem{example}[theorem]{Example}

\newcommand{\bbc}{\mathbb{C}}
\newcommand{\bbr}{\mathbb{R}}

\newcommand{\bbp}{\mathbb{P}}
\newcommand{\bbe}{\mathbb{E}}

\newcommand{\bbn}{\mathbb{N}}

\newcommand{\cf}{\mathcal{F}}

\newcommand{\abs}[1]{\left| #1 \right|}
\newcommand{\norm}[1]{\left\| #1 \right\|}


\makeatletter
\newcommand*\Si{\mathpalette\Si@{.5}}
\newcommand*\Si@[2]{\mathbin{\vcenter{\hbox{\scalebox{#2}{$\m@th#1\bullet$}}}}}
\makeatother

\newcommand{\C}{\mathbb{C}} 
 
\newcommand{\R}{\mathbb{R}}

\newcommand{\N}{\mathbb{N}} 
\newcommand{\F}{\mathcal{F}}
\newcommand{\Ap}{\mathcal{A}^+}
\newcommand{\A}{\mathcal{A}}

\newcommand{\M}{\mathcal{M}}

\newcommand{\V}{\mathcal{V}}
\newcommand{\Var}{\text{Var}}
\newcommand{\Pro}{\mathbb{P}}
\newcommand{\Lo}{\mathcal{L}}
\newcommand{\E}{ \mathbb{E}}

\newcommand{\Pre}{\mathcal{P}}

\newcommand{\B}{\mathcal{B}(\R)}

\newcommand{\ls}{\llbracket}

\newcommand{\cad}{\text{c\`{a}dl\`{a}g}}

\newcommand{\zi}{\zeta^{\infty}}
\newcommand{\tX}{\tilde{X}}

\newcommand{\zd}{\zeta^{\Delta}}

\newcommand{\oXn}{H^n}

\newcommand{\rd}{\widetilde{\bbr^d}}


\begin{document}

\allowdisplaybreaks

\title{\bfseries From Markov Processes to Semimartingales}

\author{%
    \textsc{Sebastian Rickelhoff}%
    \thanks{Department Mathematik, Universit\"at Siegen, D-57068 Siegen Germany,
                      \texttt{Sebastian.Rickelhoff@uni-siegen.de}.
                                                                                         }
    \textrm{\ \ and\ \ }
    \stepcounter{footnote}\stepcounter{footnote}\stepcounter{footnote}
    \stepcounter{footnote}\stepcounter{footnote}%
    \textsc{Alexander Schnurr}%
    \thanks{Department Mathematik, Universit\"at Siegen, D-57068 Siegen Germany,
              \texttt{schnurr@mathematik.uni-siegen.de}. }.
    }
\date{\today}

\maketitle
\begin{abstract}
\noindent
In the development of stochastic integration and the theory of semimartingales, Markov processes have been a constant source of inspiration.  
Despite this historical interweaving, it turned out that semimartingales should be considered the `natural' class of processes for many concepts first developed in the Markovian framework.  As an example, stochastic differential equations have been invented as a tool to study Markov processes but nowadays are treated separately in the literature.  Moreover, the killing of processes has been known for decades before it made its way to the theory of semimartingales most recently.  \\
We describe, \textit{when} these and other important concepts have been invented in the theory of Markov processes and \textit{how} they where transferred to semimartingales.  Further topics include the symbol,  characteristics and generalizations of Blumenthal-Getoor indices.  Some additional comments on relations between Markov processes and semimartingales round out the paper. 

\end{abstract}

Keywords and phrases: Markov process,  Generator, Killing,  L\'evy process, Semimartingale characteristics, Stochastic differential equations. 

MSC 2020: 60J76 (primary); 60H10, 60J25, 60G17 (secondary).

\section{Introduction}

Brownian motion is \emph{the} starting point for several developments in the theory of stochastic processes. In a way it is in the heart and the center of the universe of processes. 
Brownian motion is a martingale, a L\'evy process, a Gaussian process,  
and finally it is both,  Markovian and a semimartingale. 

Several developments from Brownian motion to semimartingales took a detour via Markov processes. 
Sometimes this was natural and in a way semimartingales were a further generalization coming from Markov processes. 
On other occasions this was not the case: Markov processes have  been at the center of the studies in the field of stochastic processes at a certain time, and, hence, questions which naturally belong to the theory of semimarginales were somehow treated in the `wrong' framework first. 
This is not surprising, since the concept of semimartingales is much younger than the one of Markov processes.  Jarrow and Protter describe it as follows (\cite{jarrowprotter}, p.76): `The beginnings of the theory of  stochastic integration (...) were motivated and intertwined with the theory of Markov processes'.  In the subsequent sections we will see how the Markovian world has influenced the development of semimartinagales and - on the other hand - how some concepts could have been invented directly in the `right' framework. 

We can not retell the whole story of Markov processes (or semimartingales).  We try to describe the most important steps from our point-of-view and give detailed references wherever possible.  Our focus is always on when and how ideas from Markov processes were transferred to semimartingales. 

The purpose of this paper is threefold: we would like to bring together results from Markov processes and semimartingales. Two topics which are nowadays treated separately most of the time but have had a close relationship in their development. Secondly, the transition of the ideas and results from Markov processes to semimartingales might serve as a blueprint in order to transfer more results from one class of processes to another one.  Thirdly, we provide a platform for further research, in particular for younger researchers who are not able to overlook the vast literature on the topics treated here. 

In the subsequent section, we introduce the classes of processes under consideration.  We present a few historical details and fix the notation.  
Let us already mention, that neither the notation nor the classes' names are uniform in the literature,
e.g.  there are various classes which are called `It\^o process'. Furthermore, dealing with `Feller semigroups', some authors use the bounded continuous functions as the basic space, while in the interplay with semimartingales, the continuous functions vanishing at infinity are much more natural. 

In the third section we consider an aspect which is common in Markovian theory: the exictence of a `killing point' or `cemetery'.  
Blumenthal and Getoor describe this intuitively as follows (\cite{blumenthalget}, page 21): `we regard $t \mapsto X_t(\omega)$ as the path (or trajectory) of a particle moving in the space $E$ (...) until it \emph{dies} at which time it is transported to $\Delta$ where it remains forever.' This concept is transferred to the theory of semimartingales.  It is not straight forward to carry this out, since one of the main tools in semimartingale theory is to work locally. This means that one finds a sequence of stopping times (approaching $\infty$) and such that the stopped processes admit certain properties. It would be natural to consider a sequence approaching the killing time of the process in the generalized framework. This is not possible, since a jump to $\Delta$ can happen all of a sudden, that is, the killing time might not be predictable. The idea is to separate the killing into explosions and other kinds of killing (cf. \cite{cheriditoyor} and \cite{fourthchar}).

For Feller processes the infinitesimal generator contains a lot of information concerning the process itself.  For semimartingales the infinitesimal behavior is described by the so called  characteristics. In the fourth section we reconsider the definition of the characteristics of a semimartingale as it is done in the classical literature, moreover we consider  semimartingales with killing which we have introduced in Section 3 and the accompanying need for a new characteristic. In particular, we state some historical facts concerning the semimartingale characteristics together with some, from our point of view, important results on this topic.

In Section 5 we recall the notion of the `symbol' of a stochastic process which generalizes the well-known characteristic exponent of the L\'evy framework. If the generator of a Feller process admits a sufficiently rich domain, it is a pseudo-differential operator. In its Fourier representation, such an operator can be written as the multiplication with the so called symbol.  Jacob {\cite{nielsursprung}} came up with the idea to use a probabilistic formula in order to calculate this symbol, without even writing down the semigroup and the generator.  Subsequently, it was shown that this formula (or slight modifications of it) could be used to generalize the symbol even beyond the Markovian framework and that the symbol still carries information on the process. 

The symbol can be used to generalize the Blumenthal-Getoor indices.  We describe this procedure in the sixth section and present some results on the usability of these indices. 

Stochastic differential equations (SDEs) where introduced to analyze Markov process from a new perspective. However,  stochastic integration, stochastic differentials and SDEs have been generalizes since than.  In fact, there exist interesting classes of SDEs with non-Markovian solutions.  This topic is treated in Section 7. 

Section 8 contains several complementary topics, which we only sketch. They all have in common, that the interplay or connections between Markov processes and semimartingales are investigated.  

Notation: Most of the notation we are using is standard. In the context of semimartingales we mainly follow \cite{jacodshir}. The only difference is that we write $\chi\cdot id$ for the truncation function $h$, where $\chi$ is measurable with compact support and equal to 1 in a neighborhood of zero. A possible way to choose the \emph{cut-off functions} $\chi$ in different dimensions $m\in\bbn$ is as follows: take a one-dimensional cut-off function $\chi:\bbr\to\bbr$ and define for $x\in\bbr^m$: $\tilde{\chi}(x):=\chi(x^{(1)})\cdots \chi(x^{(m)})$ as the product of the one-dimensional cut-off function. Vectors are always meant to be column vectors. The transposed vector or matrix is written as $v'$ or $Q'$.

\section{The Classes of Processes Under Consideration}

Since several classes of processes are important in our investigations, and since some of these classes are not defined in a unique way in the literature, we have decided to fix some definitions and notations in this section. Readers who are familiar with these classes of processes may skip this section and get back to it later if they want to check a certain definition. In the present paper all stochastic processes under consideration take values in $E\subseteq\bbr^d$, where one or two points of no return are added.  Admittedly,  Markov processes could be treated in a more general framework and there have been attempts to introduce more general state spaces for semimartingales as well (cf. \cite{Meti1982}). However, it seems that most of the literature on semimartingales is still restricted to Euclidean space.  The killing of processes will later be treated separately, hence, the processes $(X_t)_{t \geq 0}$  in this section are all conservative, that is, $\bbp(X_t\in\bbr^d)=1$ for every $t\geq 0$. 

A good interim class between Brownian motion on one side and Markov processes respectively semimartingales on the other side are \emph{L\'evy processes}. These are stochastic processes with values in $\bbr^d$ having stationary and independent increments. In addition, it is demanded that the paths are c\`adl\`ag, that is, they are right-continuous with existing left limits.  There is a vast literature on these processes, which are an interesting class on their own right, a building block for more complicated classes of processes and the starting point for many investigations considering Markov processes and semimartingales. Let us mention the monographs \cite{bertoin} and \cite{sato} for details on this famous class of processes.  One result on L\'evy processes is of particular interest for the following sections: It is a well known fact that the characteristic function $\varphi_{Z_t}:\bbr^d\to\bbc$ of such a process $Z=(Z_t)_{t\geq 0}$ being started in $z\in\bbr^d$
can be written as $(t\geq 0)$
\begin{align} \label{charexp}
  \varphi_{Z_t}(\xi)=\bbe^z\left( e^{i(Z_t-z)'\xi} \right) = \bbe^0\left( e^{iZ_t'\xi} \right) =  e^{-t\psi(\xi) }
\end{align}
where 
\begin{align} \begin{split}\label{lkf}
  \psi(\xi)&= -i \ell'  \xi + \frac{1}{2} \xi'Q \xi
      - \int_{\bbr^d\backslash\{0\}} \left(e^{i y'\xi} -1 - i y'\xi \cdot \chi(y) \right) \,  N(dy).
\end{split}\end{align}
Here, $\ell\in\bbr^d$, $Q$ is a positive semidefinite matrix, $N$ the so called L\'evy measure (cf. \cite{sato} (8.2)) and $\chi:\bbr^d\to \bbr$ is a cut-off function.  The function $\psi$ is a continuous negative definite function in the sense of Schoenberg (cf. \cite{bergforst}, Section 7).  Functions of this kind will appear on several occasions subsequently.  The function $\psi$ ist called \emph{characteristic exponent} of the L\'evy process. 

In Section 6 we will deal with a particular subclass of L\'evy processes, namely (one-dimensional strictly) \emph{$\alpha$-stable processes}.  These are L\'evy processes $Z$ with characteristic function
\[
 \varphi_{Z_t}(\xi)=e^{-ct\abs{\xi}^\alpha}
\]
with constants $c >0$ and $0<\alpha\leq 2$. 
Compare in this context \cite{sato} Section 14.

Before we recall the definition of the most important class of processes under consideration in this article, namely semimartingales, we have to reconsider some fundamental classes of processes first:
\begin{align*}
\mathcal{M}^d \; &:= \{ M: M \text{ is a local martingale with values in } \bbr^d \} \\
\mathcal{L}^d \; &:= \{ M \in \mathcal{M}: M_0=0  \} \\
\mathcal{V}^+&:= \{ A: A \text{ is c\'adl\'ag, adapted process with increasing paths and }\\
&\quad \quad \quad \; \; A_0=0 \text{ and values in } \bbr\} \\
\mathcal{V} \; &:= \{ A: A \text{ is c\'adl\'ag, adapted process with paths of finite variation and }\\
&\quad \quad \quad \; \; A_0=0 \text{ and values in } \bbr\} \\
\mathcal{A}^+ &:= \{ A \in \mathcal{V}^+: \E( \text{Var}(A)_\infty ) < \infty \}\\
\mathcal{A} \;&:= \{ A \in \mathcal{V}: \E( \text{Var}(A)_\infty ) < \infty \}
\end{align*}
The index $d$ is omitted if we consider one-dimensional processes. Moreover, for a class of processes $\mathcal{C}$ we denote by $\mathcal{C}_{loc}$ the localized class of $\mathcal{C}$, and by $\mathcal{C}_{ad}$ the class of all processes $X \in \mathcal{C}$ being additive (cf. \cite{vierleute},  (3.16)).
\begin{definition}\label{def:semi}
We call a stochastic process $X= (X_t)_{t \geq 0}$ on the stochastic basis $(\Omega, \mathcal{F}, (\mathcal{F}_t)_{t \geq 0}, \bbp)$ a \emph{semimartingale} if it is of the form 
$$
X=X_0+M+A,
$$
where $X_0$ is finite-valued and $\mathcal{F}_0$-measurable, $M\in \mathcal{L}$, and $A \in \mathcal{V}$. \\
Moreover, we call a $d$-dimensional stochastic process $X_t=(X^{(1)}_t,...,X^{(d)}_t )'$ a \emph{$d$-dimensional semimartingale} if $X^{(i)}$ is a semimartingale for all $i \in \{1,...,d\}$, and we denote by $\mathcal{S}^d$ the class of $d$-dimensional semimartingales.

\end{definition}
By the famous L\'evy-It\^o decomposition (cf. \cite{sato} Chapter 4), every L\'evy process is a semimartingale (with respect to its natural filtration). The notion of semimartingales was introduced by Fisk in \cite{fisk} under the name `quasimartingale' to investigate necessary and sufficient conditions for a stochastic process to posses a decomposition into the sum of a martingale and a process with paths of finite variation. 
However,  the term semimartingale and its application in stochastic integration was first introduced by Meyer in \cite{meyer2} in 1967, where he defined a stochastic process $X$ to be a semimartingale if it is right-continuous and possesses a decomposition
$$
X=M+A
$$
into a martingale $M$ and a stochastic process with paths of finite variation $A$. Indeed, Meyer was inspired by a famous paper \cite{kunitawatanabe} by Kunita and Watanabe who investigated a more general version of It\^o's formula, namely replacing the Brownian motion by martingales which are not necessarily continuous. The non-continuity led to a much more complicated form.  Kunita and Watanabe needed the underlying probability space to be that of a Hunt process, i.e., a strong Markov process which is quasi-left-continuous. \\
In his paper, which was one of four on the topic of stochastic integration,  Meyer was able to leave the Markovian framework behind.  As the attentive reader might have noticed, Meyer's definition as stated above, is not the most general definition as it is known today. This was introduced by Dol\'eans-Dade and Meyer \cite{ddmeyer} in 1970. \\
For a much more detailed look into the development of stochastic integration, semimartingales and mathematical finance until 1970 we refer to the interesting article \cite{jarrowprotter}. 

Most textbooks which introduce semimartingales follow, in some sense, the historical approach as stated above to define semimartingales. 
A different approach was taken by Protter in \cite{protter}.  In contrast to Definition \ref{def:semi}, Protter defined semimartingales to be those processes for which the stochastic integral is continuous (for a more precise definition see Chapter II.2 of \cite{protter}). Indeed, this provides some advantages to the classical procedure since the proofs of some important results are much more intuitive. By the famous theorem of Bichteler and Dellacherie (see Theorem 43 of \cite{protter}), both definitions of semimartingales are equivalent.  Let us mention that there is a Banach space valued theory of semimartingales and stochastic integration (cf. \cite{Meti1982}). 

The second main class of process, we consider are \emph{Markov processes} $(\Omega, \cf, (\cf_t)_{t\geq 0}, (X_t)_{t\geq 0}, (\theta_t)_{t \geq 0} ,\bbp^x)_{x\in\bbr^d}$. We use this term in the sense of Blumenthal-Getoor (cf. \cite{blumenthalget}) this means in particular that the following formula holds
\begin{align} \label{universalmp}
  P_{s,t}^w(x,A) = P_{0,t-s}^{x}(x,A)=:P_{t-s}(x,A)
\end{align}
where $P_{s,t}^w(x,A)$ is the regular version of $\bbp^w(X_t\in A \, | \, X_s=x)$ with $w,x\in\bbr^d$, $s\leq t$ and $A$ is a Borel set in $\bbr^d$. Markov processes of this kind are often called Markov families or universal Markov processes (cf. \cite{bauer}, \cite{niels3}). 
We assume that every universal Markov process is normal, that is,  $\bbp^x(X_0=x)=1$. This is not a severe restriction. 
Furthermore, we consider only $\bbr^d$ as state space, since we are interested in the connection with semimartingales having values in the same space. In Section \ref{sec:killing} additional points outside the Euclidean space are considered.  

As usual, we associate a semigroup $(T_t)_{t\geq 0}$ of operators on $B_b(\bbr^d)$, the bounded Borel-measurable functions,  with every (universal) Markov process by setting
\[
    T_t u(x):= \bbe^x u(X_t), \quad t\geq 0,\; x\in \bbr^d.
\]
For every $t\geq 0$, $T_t$ is a contractive, positivity preserving and sub-Markovian
operator on $B_b(\bbr^d)$. In order to use methods from functional analysis, we need more structure on the underlying space: denoting by $(C_\infty(\bbr^d),\norm{\cdot}_\infty)$ the Banach space of all functions $u:\bbr^d\to\bbr$ which are continuous and vanishing at
infinity, that is,  $\lim_{\norm{x}\to\infty}u(x) =0$. We call $(T_t)_{t\geq 0}$ a \textit{Feller
semigroup} and $(X_t)_{t\geq 0}$ a \emph{Feller process}, if the semigroup is strongly continuous, that is, $\norm{T_tu-u}_\infty\to 0$ for $t\downarrow 0$, and if the following condition is satisfied:
\begin{align}
  T_t:C_\infty(\bbr^d) \to C_\infty(\bbr^d) \text{ for every }t\geq 0.
\end{align}

Without loss of generality we assume all Feller processes we encounter to be c\`adl\`ag (cf. \cite{revuzyor} Theorem III.2.7). This is natural, because we are dealing with semimartingales all the time. The \emph{generator} of the Feller process $(A,D(A))$ is the closed operator given by
\begin{gather}\label{generator}
    Au:=\lim_{t \downarrow 0} \frac{T_t u -u}{t} \qquad\text{for\ \ } u\in D(A)
\end{gather}
where the \emph{domain} $D(A)$ consists of all $u\in C_\infty(\bbr^d)$ for which the limit \eqref{generator} exists uniformly.  A Feller process is called \emph{rich} if $C_c^\infty(\bbr^d)\subseteq D(A)$, that is, the test functions are contained in the domain of the generator. In the literature this property is sometimes called `nice' or (R).

A universal Markov process $X$ is called \emph{Markov semimartingale}, if $X$ is for every $\bbp^x$ a semimartingale (\cite{jacodshir} Definition I.4.21). If it is in addition quasi-left-continuous, it is called \emph{Hunt semimartingale}. In \cite{mydiss} it has been shown that every rich Feller process is a Hunt semimartingale and even an \emph{It\^o process} in the sense of \c{C}inlar, Jacod, Protter and Sharpe \cite{vierleute}, that is, a Markov semimartingale with characteristics (cf.  Definition \ref{def:chars}) of the following form
\begin{align} \begin{split} \label{chars}
  B^{(j)}_t(\omega)&=\int_0^t \ell^{(j)}(X_s(\omega)) \ ds \\
  C^{jk}_t(\omega)&=\int_0^t Q^{jk} (X_s(\omega)) \ ds \\
  \nu(\omega;ds,dw)&=N(X_s(\omega),dw)\ ds
\end{split}\end{align}
for every $x\in\bbr^d$ with respect to a fixed cut-off function $\chi$. Here
$\ell(x)=(\ell^{(j)}(x))_{1\leq j \leq d} \in \bbr^d$, $Q(x)=(Q^{jk}(x))_{1\leq j,k \leq d}$ is a symmetric positive semidefinite matrix, $N(x,dw)$ is a measure on $\bbr^d\setminus\{0\}$ such that $\int_{w\neq 0} (1 \wedge \norm{w}^2) \,N(x,dw) < \infty$. We call $\ell$, $Q$ and $n:=\int_{w\neq 0} (1\wedge \norm{w}^2) \ N(\cdot,dw)$ the \emph{differential characteristics} of the process. 
The class of semimartingales (which are not necessarily Markovian), having characteristics as described in \eqref{chars} are called \emph{homogeneous diffusions with jumps}, h.d.w.j. for short (cf. \cite{jacodshir} Definition 3.2.18). 

L\'evy processes are exactly those Markov processes being homogeneous in time \emph{and} space (having c\`adl\`ag paths) and they are exactly the h.d.w.j. having deterministic differential characteristics. 

For the readers convenience we include the following diagram, parts of which have been presented in various of our papers (\cite{generalizedindices}, \cite{detmp2}, \cite{detmp1}):
\begin{align*}
\begin{array}{ccccccccc}
&& &&\text{h.d.w.j.}&& \subset && \text{semimartingale}\\
&&  &&\cup&&&& \cup \\
    \text{L\'evy}  & \subset  & \begin{array}{c}\text{rich} \\ \text{Feller} \end{array} & \subset & \text{It\^o}& \subset & \begin{array}{c}\text{Hunt} \\ \text{ semimartingale} \end{array}
         &\subset  &   \begin{array}{c}\text{Markov} \\ \text{ semimartingale} \end{array}   \\ 
                 & \rule[5mm]{0mm}{0mm} &    \cap & &&& \cap &&\cap\\
                 & \rule[5mm]{0mm}{0mm} & \text{Feller} & & &\subset  &\text{Hunt}&\subset&\text{Markov}
\end{array}
\end{align*}
Every inclusion in this diagram is strict. 

\section{Killing of Markov Processes and Semimartingales} \label{sec:killing}
As we have mentioned above,  historically, while exploring the area of semimartingales many researchers took properties of Markov processes as guidelines for new results on semimartingales.  The first topic on which we will emphasize this procedure is the killing of processes. If Markov processes are defined via martingale problems, or sub Markovian kernels, probability mass might be lost over time. In this case one usually adds an ideal point $\partial$ to the state space $E$ and allows for a transition to this \textit{graveyard} or \textit{killing point} (cf. \cite{ethierkurtz} page 166).
In this case, $\partial$ is a point-of-no-return in the sense that, if $X_s(\omega) = \partial$ then $X_t(\omega)=\partial$ for all $t \geq s$. Moreover, the stopping time $\zeta(\omega):= \inf \{ t \geq 0 : X_t(\omega)=\partial \}$ is called the lifetime of the Markov process $X$.  Although, well observed and understood, the concepts of killing points and lifetimes were not integrated into the classical theory of semimartingales for a long time. \\
In 2005 Cheridito, Filipovi\'c and Yor (see \cite{cheriditoyor}) finally dealt with this topic: similar to the Markovian context the authors have considered a stochastic process with values in a state space $(E \cup \{\partial\},\mathcal{E}_\partial)$ where $E$ is a closed subset of $\R^d$. They have set $\|\partial\|:= \infty$ and $T_\partial:= \inf\{ t \geq 0 : X_t= \partial \text{ or } X_{t-}=\partial \}$. The authors have discovered that a transition to $\partial$ occurs either by a jump or by an explosion (see below).  The main idea was to separate the space of paths depending on the kind of killing that occurs.  To this end they have used a sequence of stopping times. 
The processes they have considered are no semimartingales in the usual sense. To turn them into classical semimartingales Cherdito et al.  have separated the killing state like the space of paths.  Here, $\Delta$ denotes the state, which is reached all of a sudden. They have picked an arbitrary point $y$ in $\bbr \backslash E$ and set $\Delta:=y$. If $E=\bbr^d$ even an artificial new dimension was added in order to include one of the points of no return into Euclidean space.  Explosions can not be treated in the same way. Hence, the authors have demanded, that for an announcing sequence of the explosion time, every stopped process is a classical semimartingale.  Summing up: they have dealt with two possible kinds of killing in different ways, but in each case, they got rid of the points that are not in the Euclidean space. Afterwards, they treated the process as in the classical theory of semimartingales.  Using this procedure, the information on killing is lost. One can not write down a representation of the whole process and the new process is not canonical, because the point $y$ can be chosen arbitrarily. 
In \cite{fourthchar} Schnurr has used the idea to separate the process considering the two ways of killing. However, there $\Delta$ remains what it is, namely a point outside the Euclidean space. Explosions can be treated by stopping along an announcing sequence.  A new characteristic is introduced, which describes the sudden killing.  This is in-line with the characteristic exponent of L\'evy processes and allows to treat the generator of Markov semimartingales via (four) characteristics. 

In  the following let $E\subseteq \R^d$ be a closed set, and let $\mathcal{E}$ be the Borel $\sigma$-field on $E$. We equip $E$ with the so-called \textit{killing points}, namely,  $\infty$ and $\Delta$, lying outside $\R^d$. Let $\tilde{E}=E\cup \{\infty\} \cup \{ \Delta \}$ and $\tilde{\mathcal{E}}$ be the $\sigma$-field on $\tilde{E}$ containing $\mathcal{E}, \{\infty\}$, and $\{\Delta\}$.  We consider a $\cad$ stochastic process $X$ on a stochastic basis  $(\Omega, \F, (\F_t)_{t\geq0},\Pro)$ with values in $\tilde{E}$.  Moreover, we define $\F^X_t :=\sigma( X_s : s \leq t )$ for $t \in \R_+$ and $\F^X= \bigvee_t \F^X_t$.  We establish the following calculation rules for the points $\infty$ and $\Delta$:

\begin{definition}
Let $\infty$ and $\Delta$ be as above. Then the following holds:
\begin{itemize}
\item[$\bullet$] $\infty+r = \infty$ and $\Delta +r = \Delta$ for $r \in \R^d$ and $\Delta+\Delta = \Delta$, $\infty+\Delta = \Delta$.
\item[$\bullet$]$\Delta-r=\Delta$ for $r\in \R^d\cup \{\infty\}$ and $\infty-r = \infty $ for $r\in \R^d$.
\item[$\bullet$] $\Delta \cdot r = \Delta $ and $\infty\cdot r = \infty $ for $r \in \R^d\setminus \{0\}$ and $\Delta \cdot 0 = 0 $, $\infty \cdot 0 =0$.
\item[$\bullet$] The Euclidean norm $||\cdot ||$ of $\infty$ and $\Delta$ equals $+\infty$.
\end{itemize}
\end{definition}

\begin{definition}\label{1.1}
Let $(\sigma'_n)_{n\in \N}$ be an increasing sequence of stopping times defined by 
\[
\sigma'_n:= \inf\{ t \geq 0 : \ ||X_t|| \geq n \text{ or } ||X_{t-}|| \geq n \}.
\]
We call $(\sigma'_n)_{n\in\N}$ a \textit{separating sequence}. 
\end{definition}

As the name of the sequence of stopping times indicates it provides a separation between the two different ways in which the process $X$ is able to leave the classical space $E$ and takes the values $\infty$ or $\Delta$.  The next definition shows exactly this:

\begin{definition}\label{1.2}
Let $X$ be a stochastic process with separating sequence $(\sigma'_n)_n$. We define stopping times $\zeta^\partial, \zeta^\Delta, \zeta^\infty$ and $\sigma_n$ as follows
\begin{align*}
\zeta^\partial &:= \inf\{ t \geq 0: \ X_t\in \{\Delta,\infty\}\}, \\
\zeta^\Delta &:= \begin{cases} \zeta^\partial, & \text{if } \sigma'_n = \zeta^\partial \text{ for some } n\in \N\\
\infty,& \text{if }  \sigma'_n < \zeta^\partial \text{ for all } n\in \N \end{cases},\\
\zeta^\infty &:= \begin{cases} \zeta^\partial, & \text{if } \sigma'_n < \zeta^\partial \text{ for all } n\in \N\\
\infty,& \text{if }  \sigma'_n = \zeta^\partial \text{ for some } n\in \N \end{cases},\\
\sigma_n &:= \begin{cases} \sigma_n', & \text{if } \sigma'_n < \zeta^\partial \\
\infty,& \text{if }  \sigma'_n = \zeta^\partial \end{cases}.\\ 
\end{align*} 
\end{definition}

The stopping time $\zeta^\partial$ is the first time the process $X$ leaves $E$, and, moreover, the stopping time $\sigma_n'$ stops the process at the time where its norm exceeds $n$. Since,  in the case where $\zd$ is finite, it coincides with $\sigma_n'$ for one $n$, one can think of $\zd$ as a sudden killing. Moreover, by the definition of $\zi$ the separating sequence never equals $\zeta^\partial$ (also when $\zd$ is finite),  and in this case one can think of $\zi$ as some kind of explosion.  Let us mention that it might appear more canonical to separate between predictable and totally inaccessible killing. In fact, this leads nowhere. The only useful separation in this case is `explosion vs. everything else'.  Indeed, the explosion time is predictable. In order to be more general, we allow a transition from $\infty$ to $\Delta$, that is, we treat processes of the following kind:

\begin{definition}\label{def:killedprocess}
Let $X$ be a stochastic process on $(\Omega, \F^X,(\F^X_t)_{t\geq 0}, \Pro)$ with values in $\tilde{E}$. Moreover, let $\zeta^\infty$ posses the announcing sequence $(\sigma_n\wedge n)_n$ and let $\zeta^\Delta$ be as above.\\
Then $X$ is called a \textit{process with killing}, if 
\[
X 1_{\ls 0, \zeta^\infty \ls} \subseteq E, \; X 1_{\ls \zeta^\infty, \zeta^\Delta \ls}=\infty \text{  and } 
X 1_{\ls \zeta^\Delta, \infty \ls}=\Delta.
\]
Thereby, we set $\lbrack \zeta^\infty(\omega),\zeta^\Delta(\omega)\lbrack = \emptyset$ if $\zeta^\infty(\omega) \geq \zeta^\Delta(\omega)$. In particular, if $\zeta^\Delta =+\infty$ we call $X$ a \textit{process with explosion}.
\end{definition}

As an example for a process of this kind, one can consider the solution of an SDE with locally Lipschitz coefficients. This solution might already have explosion times. This process is then multiplied with another one, which is 1 until an exponentially distributed killing time, sending it to $\Delta$. In this case, a transition from $\infty$ to $\Delta$ is possible.  A process of this kind is even a generalized semimartingale (see below). 

We now adapt the concept of killing to various classes of processes.  Since the main goal is to define semimartingales with killing,  it is useful to define processes with finite variation and martingales with killing also.  The natural way to do so is to demand a process to fulfill the required properties (i.e.  the finite variation or the martingale property) before it leaves the state space $E$.  We will later see that such a definition is what is required for a proper definition of a semimartingale with killing.

\begin{definition}\label{GenSem}
Let $X$ be a process with killing and killing times $\zi,\zd$, and let $(\tau_n)_{n\in\bbn}$ be the announcing sequence of $\zi$.\\
We define for every $n\in \N$ the following stopping time:
$$ \alpha_n:= \tau_n \wedge \zd$$
and call the sequence $(\alpha_n)_{n\in\bbn}$ the \textit{pre-explosion sequence} of $X$.\\
Let $\mathcal{C}$ be a class of processes on a stochastic basis $(\Omega, \F, (\F_t)_{t\geq 0},\Pro)$. We denote by $\mathcal{C}^\dagger$ the set of all processes $X$ with killing on $(\Omega, \F, (\F_t)_{t\geq 0},\Pro)$ and values in $\tilde{E}$ such that $X$ belongs to $\mathcal{C}$ on $\ls 0, \alpha_n \ls$ for all $n \in \N$.
\end{definition}

\begin{definition}
We call a stochastic process $\tX \in \mathcal{S}^\dagger$ a \textit{generalized semimartingale}.  Such a process possesses a decomposition of the form
$$
\tilde{X}_t=X_t +K_t
$$ 
where $(X_t)_{t\geq 0}$ is a process with explosion and $X_t^{\alpha_n-} \in \mathcal{S}$ for all $n \in \N$ and 
$$
(K_t(\omega))_{t\geq 0 }:=(\Delta \cdot 1_{\ls \zd, \infty \ls}(\omega,t))_{t \geq 0}
$$
 is the so called \textit{killing process}.
\end{definition}

\begin{remark}
Without loss of generality we are able to choose $\alpha_n=\zd$
for all $n\in \N$ if $\zd < \zi$.  This is the case since $\tau_n \uparrow \zi$ for $n\to \infty$.
\end{remark}

With that, the following proposition obviously holds true:

\begin{proposition}\label{r6}
The process $\tilde{X}$ with killing and killing times $\zi, \zd$ is a generalized semimartingale if and only if
$$
\tilde{X}_t= \tilde{X}_0 + V_t + M_t ,\quad t\geq 0
$$
for processes $M \in  \Lo^\dagger$ and $ V \in \V^\dagger$.
\end{proposition}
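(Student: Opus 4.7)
The plan is to verify the two directions separately: ($\Leftarrow$) is essentially a bookkeeping consequence of the definitions, while ($\Rightarrow$) amounts to a consistent gluing of classical semimartingale decompositions on each piece $\ls 0, \alpha_n \ls$ of the pre-explosion sequence.

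For ($\Leftarrow$), I would simply observe that if $\tilde X = \tilde X_0 + V + M$ with $V \in \V^\dagger$ and $M \in \Lo^\dagger$, then by the definition of the $(\cdot)^\dagger$ classes given just before the proposition, the restrictions of $V$ and $M$ to $\ls 0, \alpha_n \ls$ lie in $\V$ and $\Lo$ respectively for every $n$. Definition \ref{def:semi} then forces $\tilde X \in \Se$ on each $\ls 0, \alpha_n \ls$, i.e.\ $\tilde X \in \Se^\dagger$, which is precisely the assertion that $\tilde X$ is a generalized semimartingale.

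For ($\Rightarrow$), Definition \ref{def:semi} applied level by level provides, for each $n$, some $M^{(n)} \in \Lo$ and $V^{(n)} \in \V$ with $\tilde X = \tilde X_0 + M^{(n)} + V^{(n)}$ on $\ls 0, \alpha_n \ls$. To assemble a single global pair I would first peel off the large jumps by setting
\[
J_t := \sum_{s \leq t} \Delta \tilde X_s \, \mathbf{1}_{\{\norm{\Delta \tilde X_s} > 1\}},
\]
which lies in $\V^\dagger$ because a semimartingale has only finitely many jumps of size larger than $1$ on bounded intervals. The residual process $\tilde X - \tilde X_0 - J$ then has bounded jumps on each $\ls 0, \alpha_n \ls$ and is therefore a special semimartingale there, admitting a \emph{unique} canonical decomposition $M + B$ with $M \in \Lo$ and $B \in \V$ predictable. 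Uniqueness makes the decompositions for different $n$ automatically compatible on overlaps and therefore defines global processes $M$ and $B$ on $\ls 0, \zi \wedge \zd \ls$. Setting $V := B + J$ and extending $M$ and $V$ past $\zi \wedge \zd$ by the killing values ($\infty$ on $\ls \zi, \zd \ls$ and $\Delta$ on $\ls \zd, \infty \ls$) as in Definition \ref{def:killedprocess}, one obtains $M \in \Lo^\dagger$, $V \in \V^\dagger$ and the identity $\tilde X = \tilde X_0 + V + M$ under the arithmetic for $\infty$ and $\Delta$ fixed at the start of this section.

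The main obstacle is precisely the non-uniqueness of the raw semimartingale decomposition: the pairs $M^{(n)}, V^{(n)}$ chosen independently for each $n$ need not glue into a consistent global pair. The truncation-plus-canonical-decomposition device circumvents this by making both ingredients ($J$ and the predictable part $B$) uniquely determined, so compatibility across the sequence $(\alpha_n)$ is automatic. The remaining checks — that $J$ lies in $\V^\dagger$, and that the extension past $\zi$ and $\zd$ produces the correct equality under the $\infty/\Delta$ calculus — are routine bookkeeping, which is presumably why the author labels the proposition as obvious.
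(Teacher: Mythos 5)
Your argument is correct, but it is worth noting that the paper itself offers no proof at all: Proposition \ref{r6} is introduced with ``with that, the following proposition obviously holds true,'' i.e.\ the authors regard it as pure definition--unwinding, the class $\Se^\dagger$ being built from Definition \ref{def:semi} exactly as $\Lo^\dagger$ and $\V^\dagger$ are built from $\Lo$ and $\V$. What you add is a genuine justification of the one step that is not literally tautological, namely that the level-$n$ decompositions on $\ls 0,\alpha_n\ls$ can be chosen consistently so as to produce a \emph{single} pair $(M,V)$; your device --- peel off the jumps of norm $>1$ into a finite-variation process $J$ and then invoke the uniqueness of the canonical decomposition of the resulting special semimartingale on each $\ls 0,\alpha_n\ls$ --- handles this cleanly and is a standard and correct way to force compatibility across the pre-explosion sequence. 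An even more elementary route, closer in spirit to the paper's ``obviously,'' is to glue by telescoping increments: set $M:=\sum_k\bigl((M^{(k)})^{\alpha_k}-(M^{(k)})^{\alpha_{k-1}}\bigr)$ and likewise for $V$, so that on each $\ls 0,\alpha_n\ls$ one has a finite sum of (stopped) local martingales resp.\ finite-variation processes; this avoids special semimartingales altogether, at the price of not producing a canonical choice. Your final bookkeeping with the $\infty/\Delta$ arithmetic is fine, though you might note that it suffices (and is tidier) to load the killing entirely onto $V$ and leave $M$ finite, and that the paper's Definition \ref{GenSem} tacitly assumes the pre-explosion sequences of $M$, $V$ and $\tilde X$ are the same --- an ambiguity in the paper's definitions rather than a gap in your argument.
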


The class of generalized semimartingales is big enough to contain various examples like L\'evy processes with killing, solutions of SDEs with locally Lipschitz coefficients and certain Markov processes defined by sub-Markovian kernels.  Furthermore, this class can be treated in a similar way as classical semimartingales. 

\begin{example} \label{ex:lpwithkilling}
The characteristic exponent $\psi$ of each L\'evy process is a continuous negative definite function in the sense of Schoenberg (cf. \cite{bergforst}, Section 7).  It is a well-known fact in potential theory that each function of this class can be represented in the following way:
\begin{align} \begin{split}\label{lkfa}
  \phi(\xi)&= a -i \ell'  \xi + \frac{1}{2} \xi'Q \xi
      - \int_{\bbr^d\backslash\{0\}} \left(e^{i y'\xi} -1 - i y'\xi \cdot \chi(y) \right) \,  N(dy).
\end{split}\end{align}
That is, the component $a>0$ does not have a stochastic counterpart in the classical L\'evy world. However,  even with the additional component one can associate a stochastic process $\widetilde{Z}$ with this characteristic exponent via \eqref{charexp}. This process is the L\'evy process $Z$ associated with $(\ell,Q,N)$ with the following modification: with $a$ we associate a killing time, which is exponentially distributed with parameter $a$ and independent of $Z$. The new process $\widetilde{Z}$ (with killing) behaves like $Z$, but as soon as the killing time is reached, it jumps to $\Delta$.
In our notation from above, this can be written as $\widetilde{Z_t}=Z_t+K_t$ where $K=(K_t)_{t\geq 0}$ denotes the `killing process' which only attends values in $\{0,\Delta\}$. 
\end{example}

\section{Semimartingale Characteristics} \label{sec:chars}

Markov processes can be analyzed or even characterized by their generator. L\'evy processes are in 1:1 correspondence with their exponential exponent (cf. \cite{sato} Thm.  8.1 and Cor. 11.6).  It is not a surprise, that people felt the demand to possess similar tools like this in the framework of semimartigales.  This lead to the concept of semimartingale characteristics.  All of these objects have in common that they describe the infinitesimal behavior of the stochastic process.  The search for indices or parameters of this kind can be traced back to Kolmogorov \cite{kolmogorov31}. 

Before we come to the definition of the characteristics for a generalized semimartingale, i.e.,  for a process with killing, we reconsider the characteristics of a semimartingale in the classical sense:
let us recall, that a truncation-function $h:\R^d \to \R^d$ is a measurable function with compact support which coincides with the identity in a neighborhood of zero, and that a cut-off function $\chi:\R^d \to \R^d$ is a measurable function with compact support which equals $1$ in a neighborhood of zero.  Although, we work with cut-off functions most the time, we will construct the semimartingale characteristics with the help of truncation function since it is done similarly in most of the literature.
Let $X$ be a $d$-dimensional semimartingale and $h$ a truncation function.  We define two processes as follows:
\begin{align*}
\dot{X}(h)_t&:= \sum_{s \leq t} (\Delta X_s - h(\Delta X_s)), \quad t \geq 0\\
X(h)&:= X- \dot{X}(h).
\end{align*}
A closer look on these processes and the truncation function $h$ provides that the stochastic process $(\Delta X - h(\Delta X))\neq 0$ only if there exists an $\varepsilon>0$ such that $| \Delta X|> \varepsilon $. It follows that $\dot{X}$ is well-defined, since it is the sum of the big jumps of $X$ of which only countable-many exist pathwise.  Moreover,  the process belongs to $\V^d$, and we deduce that $X(h)$ is a $d$-dimensional semimartingale.\\
By observing the jumps of $X(h)$, we easily see that
\begin{align*}
\Delta X(h)= \Delta X - \Delta \dot{X}(h) = h(\Delta X).
\end{align*}
So it follows that $\Delta X(h)$ is bounded since $h$ is. Therefore, $X(h)$ is a special semimartingale, i.e. , a semimartingale with a predictable finite variation part, by Proposition I.4.24 of \cite{jacodshir} and possesses the following canonical decomposition 
\begin{equation}\label{II 2.5}
X(h)=X_0+M(h)+B(h)
\end{equation}
where $M(h) \in \Lo^d$ and $B(h)\in \V^d$ is predictable.
With this, we are now able to define the characteristics of a semimartingale.

\begin{definition}\quad \\  \label{def:chars}
Let $h$ be a truncation function and $X$ be a d-dimensional semimartingale. 
\begin{itemize}
\item[(i.)] We define $B:=(B^1,...,B^d)'$ to be the predictable process $B(h)$ defined in (\ref{II 2.5}).
\item[(ii.)] We define $C:=(C^{ij})_{i,j \leq d}$ to be the continuous process belonging to $\V^{d\times d}$ defined by 
\[
C^{ij} := \langle X^{i,c},X^{j,c} \rangle
\]
for $i,j\in \{1,...,d\}$, where $\langle \cdot, \cdot \rangle$ is the \textit{predictable quadratic covariation} defined in Theorem I.4.2 of \cite{jacodshir}.
\item[(iii.)] We define the predictable random measure $\nu$ on $\R_+ \times \R^d$ to be the \textit{predictable compensator }(see Theorem II.1.8 of \cite{jacodshir}) of the integer-valued random measure $\mu^X$.
\end{itemize}
We call the triplet $(B,C,\nu)$ the \textit{characteristics} of $X$.
\end{definition}

In some sense, the process $B$ describes a predictable `drift' of the semimartingale $X$ through time, $C$ describes the `volatility' of the continuous martingale part $X^{i,c}$, and $\nu$ describes the rate of jumps. This resembles the characteristic triplet of a L\'evy process.  \\ 
\begin{remark}
To our knowledge the first time that semimartinale characteristics have been defined in the modern way was by
Jacod and M\'emin \cite{jacodmem} in 1976.  In their work, the authors defined the characteristics almost as above but only considering the cut-off-function $\chi(x)= 1_{[0,1]}(|x|)$. They investigated how a change of measure effects the characteristics of a semimartingale. \\
The first idea for the characteristics of a semimartingale dates back to Grigelionis \cite{grig} in 1971 or in english languge in \cite{grig2} in 1972.  In order to investigate problems like nonlinear filtering of stochastic processes or absolute continuity of measures corresponding to stochastic processes Grigelionis wanted to consider a wide class of stochastic processes for which one could naturally define local coefficients of drift, diffusion and L\'evy measure. Thus, in his paper Grigelionis considered a $\cad$ process $(X_t)_{t\geq 0}$ with values in $(\R^d, \mathcal{B}(\R^d))$ on a complete stochastic basis $(\Omega, \F, (\F_t)_{t\geq 0}, \Pro)$, a function $\Pi:\R_+\times \Omega \times \mathcal{B}(\R^d)\to \R$ 
where $ \Pi(\omega,t; dx)$ is $\mathcal{B}(\R_+)\otimes \F$-measurable for fixed $(t, \omega)$,  $\Pi(T, \Gamma)$ is $\F_T$-measurable for every $(\F_t)_{t\geq 0}$-stopping time $T$ and every Borel set $\Gamma$. Moreover, 
$$
\E\left( \int_0^t \Pi(s, U_\varepsilon) \ ds \right) < \infty , \E \left( \int_0^t \int_{|x| \leq 1} |x|^2 \Pi(s,dx) ds \right) < \infty, 
$$
where $t> 0, \varepsilon>0$ and 
$$
\sum_{s \leq t} 1_\Lambda(\Delta X_s) - \int_0^t \Pi(s, \Lambda) \ ds
$$
is a right continuous square integrable $(\F_t)_{t\geq 0}$-martingale, and $\Lambda \in \mathcal{B}(\R^d) \cap U_\varepsilon$.\\ 
Moreover Grigelionis considered a $d$-dimensional,  product-measurable function $b_t: (\R^d, \mathcal{B}(\R^d)) \to (\R, \mathcal{B}(\R))$, and a $d\times d$-dimensional,  product-measurable function $c_t: (\R^{d\times d}), \mathcal{B}(\R^{d \times d}) \to (\R, \mathcal{B}(\R))$ with 
$$
\E \left( \int_0^t b_s^2 \ ds \right)<\infty, \quad \E (c_t) < \infty 
$$
and such that 
$$
\tilde{X}_t -  \int_0^t b_s \ ds 
$$
is a continuous, $d$-dimensional, square-integrable martingale, 
and 
$$
 \left\langle \tilde{X}^i_t -  \int_0^t b_s \ ds,\tilde{X}^i_t -  \int_0^t b_s \ ds\right\rangle = \int_0^t c^{i,j}_s \ ds 
$$
with
$$
\tilde{X}_t:= X_t- 2\int_0^t \int_{\{ |x| \leq 1\}} x \ \left( \sum_{u \leq s} \delta_{\Delta X_u}(dx) \right) ds- \int_0^t \int_{\{ |x| \leq 1\}} x \ \left( \int_0^s \Pi(u, dx) \ du \right) ds.
$$
Grigelionis called processes $X$ possessing characteristics as mentioned above \textit{locally infinitely divisible stochastic processes}.
The characteristics of locally infinitely divisible stochastic process $X$ as defined in modern times would be 
\begin{align*}
B_t&= \int_0^t b_s \ ds\\
C_t &= \int_0^t c_s  \ ds\\
\nu(\omega;dt,dx) &= dt \ \Pi(\omega,t;dx).
\end{align*}
Nowadays, processes of this kind are called \textit{It\^o semimartingales}. Let us mention, that estimating the characteristics of such processes by observing the process in a high frequency regime has been a fruitful question in the theory of \textit{statistics of stochastic processes} (cf. \cite{asjac}, \cite{asjac2} and the references given therein). 
\end{remark}

As mentioned above, up to this point we considered the characteristics of a classical semimartingale. But when working with generalized semimartingales we have to redefine the three characteristics as follows.

\begin{definition} 
Let $\tilde{X}$ be a $d$-dimensional generalized semimartingale with killing times $\zi,\zd$ and separating sequence $(\alpha_n)_{n\in \N}$ and let $(B^n,C^n,\nu^n)$ be the characteristics of the semimartingale $X^{\alpha_n-}$. \\
We call the processes $B$ and $C$,  and the random measure $\nu$ the \textit{characteristics} of $\tilde{X}$, if they coincide with  the characteristics $(B^n,C^n,\nu^n)$ of $X^{\alpha_n-}$ on $\ls 0, \alpha_n \ls$ for every $n\in \bbn$. 
\end{definition}

\begin{remark}
\quad \vspace{-5mm}
\begin{itemize}
\item[(a.)]
Since the characteristics are unique up to an evanescent set, and $X^{\alpha_n-}=X^{\alpha_{n+1}-}$ on $\ls 0, \alpha_n \ls$ the characteristics of a generalized semimartingale are well-defined.
\item[(b.)]
By the previous definition, the characteristics of a generalized semimartingale are uniquely defined on $\ls 0, \zd \wedge \zi \ls$ only.  Thus,  we set
\begin{align*}
&C_t(\omega)= C_{(\zd(\omega) \wedge \zi(\omega))-} (\omega)\quad \forall t\geq (\zd \wedge \zi)(\omega), \\
&B_t(\omega)= B_{(\zd(\omega) \wedge \zi(\omega))-}(\omega) \quad \forall t\geq (\zd \wedge \zi)(\omega),\\
&\nu\left(\omega, [\zd\wedge \zi (\omega) , \infty[ \times E \right) =  0 \quad \forall \omega \in \Omega.
\end{align*}
\end{itemize}
\end{remark}

For a classical semimartingale $X$ the random measure $\nu$ compensates the jumps of $X$.  But it is obvious that we are not able to use $\nu$ to compensate a jump with height $\infty$, namely the jump to $\Delta$.  Therefore, we are not able to use only the three characteristics $(B,C,\nu)$ to determine a generalized semimartingale $\tX$.  We are in the need of a new characteristic, and to this end it seems natural not to compensate the jump to $\Delta$ itself, but to keep track, \emph{when} this jump occurs.

\begin{definition}\quad \\
Let $\tilde{X}$ be a generalized semimartingale with values in $\tilde{E}$ and stopping times $\zi,\zd$. We define the process $(A_t)_{t\geq 0}$ to be the predictable compensator of the process $1_{\ls \zd,\infty \ls}$.\\
We call $A$ the \textit{fourth characteristic of a generalized semimartingale}, and, moreover, the quadruple $(A,B,C,\nu)$ the \textit{characteristics of a generalized semimartingale}.
\end{definition}

\begin{definition}\label{def:autosem}
Let $\tX$ be a generalized semimartingale with characteristics $(A,B,C, \nu)$. We call $\tX$ an \textit{autonomous semimartingale} if the characteristics are of the form 
\begin{align*}
A_t&= \int_0^t a(\tX_s) \ ds,   \\
B^i_t &= \int_0^t b^i (\tX_s) \ ds,  \\
C^{ij}_t &= \int_0^t c^{ij}(\tX_s) \ ds,  \\
\nu (\omega;dt,dx)&=\tilde{K}(\tX_t(\omega); dx) \ ds.
\end{align*}
\end{definition}

L\'evy processes with killing (cf. Example \ref{ex:lpwithkilling}) are exactly those autonomous semimartingales having deterministic differential characteristics. 

Having defined the characteristics of a generalized semimartingale, we want to state some of the, in our point of view,  most important results concerning semimartingale characteristics. For the sake of readability,  the proofs of the subsequent results are shifted to the appendix.\\ \quad \\
The characteristics of $\tX$ are unique only up to an evanescent set, and thus it is possible to modify the characteristics on such a set, in order to obtain what we will call the `good' version of $(A,B, C, \nu)$. The following theorem will provide this version, and is one of the main results of this section. Indeed, it is a generalization of Proposition II.2.9 of \cite{jacodshir}.

\begin{theorem}\label{thm:goodversion}
Let $\tX$ be a generalized semimartingale with characteristics $(A',B',C',\nu')$. Then there exists a version $(A,B,C,\nu)$ of $(A',B',C',\nu')$ satisfying the following conditions:
\begin{align}
A_t&= \int_0^t a_s \ dF_s \label{a}\\
B^i_t &= \int_0^t b^i_s \ dF_s \label{b} \\
C^{ij}_t &= \int_0^t c^{ij}_s \ dF_s \label{c}\\
\nu (\omega;dt,dx)&=dF_t(\omega) K_{\omega,t}(dx),\label{K}
\end{align}
where we have
\begin{itemize}
\item[(i.)] a predictable process $F$ belonging to $\Ap_{loc}$,  
\item[(ii.)] a predictable process $a$,
\item[(iii.)]a predictable process $b=(b^1,...,b^d)'$,
\item[(iv.)] a predictable process $c=(c^{ij})_{i,j \leq d}$ taking values in the set of all symmetric, non-negative $d\times d$-matrices,
\item[(v.)] a transition kernel $K_{\omega,t}(dx)$ from $(\Omega\times \R_+, \Pre)$ into $(\R^d, \mathcal{B}(\R^d))$ satisfying 
\begin{itemize}
\item[$\bullet$]$ K_{\omega,t}(\{0\})=0$
\item[$\bullet$] $ \int (|x|^2\wedge 1) \ K_{\omega,t}(dx) \leq 1$
\item[$\bullet$] $\Delta A_t(\omega)K_{\omega,t}(\R^d) \leq 1$.
\end{itemize}
\end{itemize}
Furthermore, the upper `good' version of $(A',B',C',\nu')$ satisfies
\begin{itemize}
\item[(1.)] $(C_t^{ij}-C_s^{ij})_{i,j} \leq d$ is a symmetric non-negative matrix for $s\leq t$.
\item[(2.)] $(|x|^2 \wedge 1) * \nu \in \A_{loc}$ and $\nu(\{t\}\times \R^d) \leq 1$.
\end{itemize}
\end{theorem}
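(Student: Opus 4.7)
The strategy follows the template of Proposition II.2.9 in \cite{jacodshir}, adapted to accommodate the new fourth characteristic $A$ and the possible breakdown of the process at $\zd \wedge \zi$. Concretely, I would (a) construct a single predictable, locally integrable, increasing reference process $F$ that dominates all four characteristics simultaneously, (b) extract predictable densities $a, b^i, c^{ij}$ via Radon--Nikodym in the predictable $\sigma$-algebra, (c) disintegrate $\nu'$ against $dF$ to produce the transition kernel $K$, and (d) modify these objects on a $dF$-null set to enforce the pathwise regularity properties in (i)--(v). The classical argument handles (a)--(d) for $(B', C', \nu')$ already; the new content is to thread $A'$ into the construction and check that the boundary at $\zd \wedge \zi$ poses no obstruction.

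For the reference process I take
\[
  F_t := A'_t + \sum_{i=1}^d \Var(B'^i)_t + \sum_{i=1}^d C'^{ii}_t + \bigl((|x|^2 \wedge 1)*\nu'\bigr)_t.
\]
Each summand is predictable. The fourth characteristic $A'$ lies in $\Ap$ because $1_{\ls \zd, \infty \ls}$ is bounded by $1$, and the remaining three summands lie in $\Ap_{loc}$ by the classical characterization of semimartingale characteristics applied to each stopped process $X^{\alpha_n -}$, which is a classical semimartingale by Definition \ref{GenSem} and whose characteristics coincide with $(B', C', \nu')$ on $\ls 0, \alpha_n \rs$. Radon--Nikodym in the predictable $\sigma$-algebra then produces predictable densities $a, b^i, c^{ii}$. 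For the off-diagonal entries, the Kunita--Watanabe inequality yields $|C'^{ij}|_t \leq (C'^{ii}_t C'^{jj}_t)^{1/2} \leq F_t$, so $C'^{ij} \ll F$ and admits a predictable density $c^{ij}$. Symmetry and positive semidefiniteness of $(c^{ij})$ hold off a $dF$-null set because the corresponding properties hold for the increments of $(C^{ij})$ as the Gram matrix of the continuous martingale parts $X^{i,c}$; modification on the exceptional null set makes them hold identically, which simultaneously produces assertion (1) and the non-negativity required in item (iv).

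For the kernel, disintegrate the predictable random measure $\nu'$ against $dF$ by the standard construction for predictable random measures surrounding \cite{jacodshir} II.1.8, obtaining $\nu'(\omega; dt, dx) = dF_t(\omega)\, K_{\omega,t}(dx)$. Off a $dF$-null set the three bullets in (v) reduce to: $K(\{0\})=0$, since $\nu'$ does not charge $\{0\}$; $\int(|x|^2\wedge 1)\, K(dx) \leq 1$ by the very inequality $(|x|^2 \wedge 1)*\nu' \leq F$ built into the definition of $F$; and the third bullet follows, at atoms of $F$, from the classical constraint $\nu(\{t\}\times \R^d) \leq 1$ combined with $\nu(\{t\} \times dx) = \Delta F_t\, K_{\omega,t}(dx)$. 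A final redefinition on a null set makes all inequalities hold pathwise, and the inclusion $(|x|^2 \wedge 1)*\nu \in \A_{loc}$ in (2) is encoded in the very membership $F \in \Ap_{loc}$.

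The main technical point I expect to verify carefully is the consistency across the pre-explosion sequence: the globally constructed densities should, on each $\ls 0, \alpha_n \ls$, agree up to evanescent sets with those that \cite{jacodshir} II.2.9 produces when applied directly to the classical semimartingale $X^{\alpha_n -}$. Together with the convention that the characteristics are constant on $\ls \zd \wedge \zi, \infty \ls$, this ensures that $F$ stays in $\Ap_{loc}$ globally (rather than only piecewise) and that no pathology develops at the killing boundary. The incorporation of the fourth characteristic $A$ is mechanically painless once one observes that $A$ is the predictable compensator of a bounded increasing process, hence a member of $\Ap$.
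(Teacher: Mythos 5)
Your proposal is correct and follows essentially the same route as the paper: both reduce to Proposition II.2.9 of Jacod--Shiryaev applied to the stopped classical semimartingales $X^{\alpha_n-}$, glue the resulting representations along the pre-explosion sequence, absorb the fourth characteristic into the dominating increasing process, and finish with a predictable Radon--Nikodym step. The only cosmetic difference is that you build $F$ explicitly as $A'+\sum_i\Var(B'^i)+\sum_i C'^{ii}+((|x|^2\wedge 1)*\nu')$ and differentiate all four characteristics against it directly, whereas the paper takes the reference processes $F'^n$ that II.2.9 already supplies on each $\ls 0,\alpha_n\ls$, passes to the limit $F'$, sets $F=F'+A$, and recovers the densities via the chain rule for the integrals $f'\Si F$ and $a\Si F$.
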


If we combine a generalized semimartingale with the stochastic basis of a strong Markov process we are able to formulate the following stronger version of the above theorem.  It is based on Theorem 6.27 of \cite{vierleute}.

\begin{lemma}\label{lem: ADDLeft}
Let $( \Omega, \mathcal{M}, (\mathcal{M}_t)_{t \geq 0}, (X_t)_{t \geq 0}, (\theta_t)_{t \geq 0}, \Pro^x)_{x \in \R^d}$ 
be a strong Markov process, and let $Y$ be a generalized $\Pro^x$-semimartingale which is additive and quasi-left continuous.  Then the characteristics $(A,B,C, \nu)$ of $Y$ are of the form
\begin{align}
A_t&= \int_0^t a(X_s) \ dF_s \\
B^i_t &= \int_0^t b^i (X_s) \ dF_s  \\
C^{ij}_t &= \int_0^t c^{ij}(X_s) \ dF_s \\
\nu (\omega;dt,dx)&=dF_t(\omega) \tilde{K}(X_t(\omega); dx) 
\end{align}
where 
\begin{itemize}
\item[(i.)] $F$ is continuous, additive, and belongs to $\V^+(\Pro^x)$ for every $\Pro^x$,
\item[(ii.)] $a$ is $\B^d$-measurable,
\item[(iii.)] $b$ is $\B^d$-measurable,
\item[(iv.)] $c$ is $\B^d$-measurable,  $d\times d$- dimensional, and takes values in the set of all symmetric non-negative matrices,
\item[(v.)] $\tilde{K}(\omega,t; dx)$ is a transition kernel from $(\Omega\times \R_+, \mathcal{O}(\mathcal{H}_t))$ into $(\R^d, \mathcal{B}(\R^d))$ with 
 $\tilde{K}(\{0\})=0$ and $ \int (|x|^2 \wedge 1) \; \tilde{K}(dx) < \infty$.
\end{itemize}
\end{lemma}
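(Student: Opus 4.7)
The plan is to localize along the pre-explosion sequence $(\alpha_n)_{n\in\N}$ from Definition \ref{GenSem}, apply Theorem \ref{thm:goodversion} to each stopped process $Y^{\alpha_n-}$ to obtain a `good' predictable version of its characteristics, and then upgrade this predictable version to an autonomous one by invoking Theorem~6.27 of \cite{vierleute}. The latter is precisely the analogous structure theorem for classical additive quasi-left-continuous Markov semimartingales, so the main new content here is the reduction to the classical case and the bookkeeping across the sequence $(\alpha_n)$.

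For each fixed $n \in \N$, the stopped process $Y^{\alpha_n-}$ is, by the definition of $\Se^\dagger$, a classical $\Pro^x$-semimartingale for every $x \in \R^d$. Both the additivity of $Y$ and its quasi-left-continuity are inherited by $Y^{\alpha_n-}$ on $\ls 0, \alpha_n \ls$. Theorem \ref{thm:goodversion} now produces a predictable dominating process $F^{(n)} \in \Ap_{loc}$ and predictable coefficients $a^{(n)}, b^{(n)}, c^{(n)}, K^{(n)}$ representing the characteristics of $Y^{\alpha_n-}$. Under additivity together with the strong Markov property, Theorem~6.27 of \cite{vierleute} then allows us to replace $F^{(n)}$ by a continuous additive functional $F$ of $X$ and to replace the predictable coefficients by Borel functions $a, b, c$ of the current state $X_s$, with the predictable kernel taking the form $\tilde K(X_s, dx)$. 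Quasi-left-continuity of $Y$ enters here to eliminate all predictable jumps of $F$ and thus to force its continuity, while the non-negativity and symmetry of $c$ is inherited directly from item (iv) of Theorem \ref{thm:goodversion}.

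The representations obtained for successive indices $n$ agree on the common stochastic interval $\ls 0, \alpha_n \ls$ by the uniqueness of characteristics up to an evanescent set together with the identity $Y^{\alpha_n-} \equiv Y^{\alpha_{n+1}-}$ there; hence $F, a, b, c$ and $\tilde K$ may be chosen independently of $n$ and automatically extend to $\ls 0, \zd \wedge \zi \ls$ via $\alpha_n \uparrow \zd \wedge \zi$. The main obstacle is the application of Theorem~6.27 of \cite{vierleute}: one has to verify that the additivity of $Y$ forces the predictable coefficients produced by Theorem \ref{thm:goodversion} to coincide, up to $\Pro^x$-evanescence simultaneously for all $x$, with Borel functions of $X_s$. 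This is the usual measure-theoretic task of constructing Borel `skeletons' $a, b, c, \tilde K$ that are compatible with every $\Pro^x$ at once, and it relies crucially on the strong Markov property of the basis $(\Omega, \M, (\M_t)_{t\ge 0}, (\theta_t)_{t\ge 0}, \Pro^x)$ rather than on any individual measure.
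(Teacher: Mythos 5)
Your overall strategy---reduce to the classical structure theorems of \cite{vierleute} for additive quasi-left-continuous Markov semimartingales and patch the representations together along the pre-explosion sequence---is the same reduction the paper performs, and for the three classical characteristics $(B,C,\nu)$ your argument would go through (the paper cites Theorem 6.24(iv), Theorem 6.19 and Theorem 3.55 of \cite{vierleute} rather than Theorem \ref{thm:goodversion} followed by Theorem 6.27, but that is a difference in how the citation is decomposed, not in substance).

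The genuine gap is the fourth characteristic $A$. Every result you invoke from \cite{vierleute} concerns the classical triplet of a Markov semimartingale; none of them says anything about the predictable compensator of $1_{\ls \zd,\infty \ls}$, so the blanket appeal to Theorem 6.27 cannot deliver the representation $A_t=\int_0^t a(X_s)\,dF_s$. Two things must be checked by hand before $A$ can be fed into the same machinery as $B$, $C$ and $\nu$. First, additivity of $A$: this requires showing that the one-point jump process $1_{\ls \zd,\infty \ls}$ is itself additive, which rests on the computation $\Theta_s 1_{\ls \zd,\infty \ls}(\omega,t)=1_{(0,t-s]}\bigl(\zd(\theta_s(\omega))\bigr)$ together with the identity $\zd=\zd\circ\theta_s-s$ on $\{s<\zd\}$ (valid because $Y$ is additive); only then does its compensator inherit additivity. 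Second, continuity of $A$: quasi-left-continuity of $Y$ forces $\zd$ to be totally inaccessible, and it is this total inaccessibility---not the elimination of predictable jumps of some dominating process $F$---that makes the compensator $A$ continuous. Once both facts are in place one can set $F:=F'+\sum_i\Var(B^i)+\sum_{i,j}\Var(C^{ij})+A$ (with $F'$ the continuous additive functional coming from the representation of $\nu$), observe $dA\ll dF$, and obtain the Borel density $a$ from Theorem 3.55 of \cite{vierleute}. Your proposal never addresses $A$ beyond listing it in the conclusion, and since handling $A$ is precisely the new content of this lemma relative to \cite{vierleute}, the proof as written is incomplete.
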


\begin{theorem}\label{II 2.34}
Let $\tX$ be a $d$-dimensional generalized semimartingale with characteristics $(A,B,C,\nu)$ relative to $\chi$. Then $\tX$ possess the following `canonical' representation:
\begin{align*}
X=X_0+X^c+(\chi\cdot id )*(\mu^X-\nu)+(id(1-\chi))*\mu^X +B+\Delta 1_{\ls \zd, \infty \ls}
\end{align*}
\end{theorem}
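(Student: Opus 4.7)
My plan is to reduce the statement to the classical canonical representation (Theorem II.2.34 in \cite{jacodshir}) applied to each pre-explosion stopped process, and then to add the sudden killing jump by hand, since it cannot be absorbed into the random measure $\mu^X$ whose target is $\bbr^d$. Using Proposition \ref{r6} and Definition \ref{GenSem}, I first write $\tilde{X} = X + K$, where $X$ is a process with explosion (so the $\zd = \infty$ part of $\tilde{X}$) and $K_t = \Delta\cdot 1_{\ls \zd, \infty \ls}(t)$ is the killing process. The summand $\Delta 1_{\ls \zd, \infty \ls}$ on the right-hand side of the claimed identity is precisely $K$, so the task reduces to proving
\[
X = X_0 + X^c + (\chi \cdot id)*(\mu^X - \nu) + (id(1-\chi))*\mu^X + B
\]
for the explosion-only part $X$.

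Next, I fix $n \in \bbn$ and pass to the stopped process $X^{\alpha_n-}$, which by Definition \ref{GenSem} is a classical $d$-dimensional semimartingale on the original stochastic basis. By the definition of the characteristics of a generalized semimartingale, the classical triplet of $X^{\alpha_n-}$ agrees with $(B, C, \nu)$ on $\ls 0, \alpha_n \ls$. Applying the classical canonical representation to $X^{\alpha_n-}$ therefore yields the displayed identity on $\ls 0, \alpha_n \ls$, because the jump measure $\mu^{X^{\alpha_n-}}$ agrees with $\mu^X$ on that interval, the continuous martingale part $(X^{\alpha_n-})^c$ agrees with $X^c$ there (both being defined via localization), and likewise for the predictable drift $B$.

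Since $(\alpha_n)$ is the pre-explosion sequence with $\tau_n \uparrow \zi$ announcing the explosion, we have $\alpha_n \uparrow \zd \wedge \zi$ as $n \to \infty$. Passing to the limit, and invoking the conventions that freeze $B$ and $C$ past $\zd \wedge \zi$ while forcing $\nu(\cdot, \ls \zd \wedge \zi, \infty \ls \times E) = 0$ (and observing that $\mu^X$ puts no mass on $\ls \zd \wedge \zi, \infty \ls \times \bbr^d$ either), the patched identities extend the representation of $X$ to all of $\R_+$. Adding the killing process $K$ then produces the claimed canonical decomposition of $\tilde{X}$.

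The main obstacle I expect is the careful bookkeeping of the passage $n \to \infty$ in the stochastic integral $(\chi \cdot id)*(\mu^X - \nu)$: one must verify that the pre-explosion sequence $(\alpha_n)$ serves as a localizing sequence for this local martingale up to the possibly explosive random horizon $\zd \wedge \zi$, so that the patched integrals on the intervals $\ls 0, \alpha_n \ls$ glue into a genuine local martingale on $\ls 0, \zd \wedge \zi \ls$. A secondary but conceptually important subtlety is that the terminal jump from $X_{\zd-}$ (or from $\infty$) to $\Delta$ at time $\zd$ is not a $\bbr^d$-valued jump of $X$, and hence is absent from both $\mu^X$ and $\nu$; this is exactly why the additive term $\Delta\cdot 1_{\ls \zd, \infty \ls}$ cannot be replaced by a stochastic integral, and why the separately introduced fourth characteristic $A$ compensates $1_{\ls \zd, \infty \ls}$ rather than the jump to $\Delta$ itself.
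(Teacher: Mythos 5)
Your proposal is correct and follows essentially the same route as the paper's (which is only outlined there, with the details deferred to Section 2 of the reference on the fourth characteristic): split off the killing process $K=\Delta\cdot 1_{\ls \zd,\infty\ls}$, apply the classical canonical representation of Jacod--Shiryaev to each stopped semimartingale $X^{\alpha_n-}$, use the agreement of the characteristics on $\ls 0,\alpha_n\ls$ to patch the decompositions along the pre-explosion sequence, and observe that the jump to $\Delta$ lives outside $\bbr^d$ and hence must be added as a separate summand rather than through $\mu^X$. Your closing remarks on the localization of $(\chi\cdot id)*(\mu^X-\nu)$ (which only needs to land in $\mathcal{L}^\dagger$, i.e.\ be a local martingale on each $\ls 0,\alpha_n\ls$) and on why the fourth characteristic compensates $1_{\ls\zd,\infty\ls}$ rather than the $\Delta$-jump itself are exactly the right points to flag.
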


\begin{theorem}\label{Schnurr2.7}
Let $\tilde{X}$ be a process with killing  and killing times $\zi,\zd$. Let $(\tau_n)_{n \in \N}$ be the announcing sequence of $\zi$. The following two statements are equivalent
\begin{itemize}
\item[(a.)] $\tilde{X}$ is a generalized semimartingale with killing times $\zi,\zd$ and separating sequence $(\alpha_n)_n$, and characteristics $(A,B,C,\nu)$. 
\item[(b.)]The following processes are local martingales for each $n\in \N$: 
\begin{itemize}
\item[(i.)] $M(h)^{\alpha_n}$, 
\item[(ii.)] $\left(M(h)^jM(h)^k- \tilde{C}^{jk}\right)^{\alpha_n}$ for each $0 \leq j,k \leq d$, 
\item[(iii.)] $\left( g \ast \mu^{\tX^{\alpha_n-}} - g \ast \nu \right)^{\alpha_n}$ for $g \in \mathcal{C}^+(\R^d)$,
\item[(iv.)] $(1_{\ls \zd,\infty \ls}-A)$. 
\end{itemize}
\end{itemize}
\end{theorem}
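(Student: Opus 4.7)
My plan is to reduce the theorem to the classical Jacod--Shiryaev characterization of semimartingale characteristics (\cite{jacodshir}, Theorem II.2.21) applied to each stopped process $\tX^{\alpha_n-}$, supplemented by the defining property of the fourth characteristic $A$. The key observation is that, by construction, $\tX^{\alpha_n-}$ takes values in the Euclidean part $E\subseteq \bbr^d$, i.e.\ it neither sees the sudden killing at $\zd$ (cut off by the left-limit at $\alpha_n$) nor the explosion at $\zi$ (strictly beyond every $\alpha_n$), so the classical theory applies on $\ls 0,\alpha_n\rs$.

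For the implication (a) $\Rightarrow$ (b): by Definition \ref{GenSem} together with the definition of the characteristics of a generalized semimartingale, each process $\tX^{\alpha_n-}$ is a classical $d$-dimensional semimartingale whose classical characteristics coincide with $(B,C,\nu)$ on $\ls 0,\alpha_n\rs$. The classical characterization theorem then delivers items (i)--(iii) directly. Item (iv) is a tautology: $A$ was defined as the predictable compensator of $1_{\ls \zd,\infty\ls}$, so $1_{\ls \zd,\infty\ls}-A$ is by definition a local martingale (once one checks that $1_{\ls \zd,\infty\ls}\in \V^+$ is locally of integrable variation, which is immediate because it is bounded).

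For the implication (b) $\Rightarrow$ (a): fix $n$. Conditions (i), (ii), (iii) applied to $\tX^{\alpha_n-}$ are exactly the hypotheses of the converse part of the classical characterization theorem, so $\tX^{\alpha_n-}$ is a classical semimartingale with characteristics $(B^{\alpha_n},C^{\alpha_n},\nu\mathbf{1}_{\ls 0,\alpha_n\rs})$. Since this holds for all $n$ and $\alpha_n\uparrow \zd\wedge\zi$, Proposition \ref{r6} implies that $\tX$ is a generalized semimartingale with characteristics agreeing with $(B,C,\nu)$ on $\ls 0,\alpha_n\rs$ for every $n$. Condition (iv), combined with the fact that $A$ is predictable and $1_{\ls \zd,\infty\ls}$ is c\`adl\`ag adapted of locally integrable variation, pins down $A$ as the compensator by uniqueness, hence as the fourth characteristic of $\tX$.

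The main technical obstacle is item (iii): one must verify that the jump measure $\mu^{\tX^{\alpha_n-}}$ really is the ordinary integer-valued random measure of an $\bbr^d$-valued semimartingale, i.e.\ that the artificial jump to $\Delta$ at $\zd$ (when $\zd=\alpha_n$) is suppressed by the left-stopping $\alpha_n-$, and that the jumps of $\tX^{\alpha_n-}$ on $\ls 0,\alpha_n\rs$ coincide with those of $\tX$ restricted there. Once this bookkeeping is done, (iii) becomes precisely the classical predictable-compensator property for $\nu$, and no further work is needed. The separation between the sudden killing encoded by $A$ and the ``ordinary'' jump activity encoded by $\nu$ is exactly what makes the four-characteristic formalism consistent here.
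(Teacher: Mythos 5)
Your proposal is correct and follows essentially the route the paper intends: the paper gives no proof of Theorem \ref{Schnurr2.7} itself but defers to Section 2 of \cite{fourthchar}, where the argument is exactly this reduction of items (i.)--(iii.) to the classical characterization of semimartingale characteristics (Theorem II.2.21 of \cite{jacodshir}) applied to the pre-stopped processes $\tX^{\alpha_n-}$, with item (iv.) handled by the defining compensator property of the fourth characteristic. The only points worth making explicit are the ones you already flag: in the direction (b.) $\Rightarrow$ (a.) the candidate $A$ must be predictable and of locally integrable variation for uniqueness of the compensator to identify it, and the left-stopping at $\alpha_n$ is precisely what removes the jump to $\Delta$ from $\mu^{\tX^{\alpha_n-}}$ so that the classical theorem applies on $\ls 0,\alpha_n\ls$.
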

The proof of the previous two theorems are outlined in Section 2 of \cite{fourthchar}.

\begin{notation}\label{NotL}
\quad \vspace*{-2mm}
\begin{itemize}
\item[(i.)] We denote the vector $(1,...,1)' \in \R^d$ by writing $\mathbf{1}$.
\item[(ii.)] Let $\tilde{X}$ be a generalized semimartingale with values in $\tilde{E}$ and characteristics $(A,B,C,\nu)$. Let $X^n:=\tilde{X}^{\alpha_n-}$ possess the characteristics $(B^n,C^n, \nu^n)$.  We define a complex-valued, predictable process $L^n(u)$ by
\begin{align*}
L^n(u)&:=e^{iu'\mathbf{1}\cdot 1_{\ls \zd, \infty \ls}}\Si A_t  -iu'B^n_t-\frac{1}{2}u'C^n_t u \\
&+ \int_{\R^d} (e^{iu'x}-1-iu'h(x))\ \nu^n([0,t]\times dx).
\end{align*}
\end{itemize}
\end{notation}

\begin{proposition}\label{ÄquiSem}
Let $\tilde{X}$ be a  process with killing, possessing killing times $\zi, \zd$ . Let $X^n$ be defined as before and $H^n:=X^n+\mathbf{1} \cdot 1_{\ls \zd, \infty \ls}$.  Then the following statements are equivalent:
\begin{itemize}
\item[(a.)] $\tilde{X}$ is a generalized semimartingale with characteristics $(A,B,C,\nu)$.
\item[(b.)] The process
\begin{align*}
&e^{iu'H^n}- e^{iu'H^n}\Si A- e^{iu'X^n_-} \Si L^n(u)
\end{align*}
is a complex valued,  local martingale for every $n \in \N$.
\item[(c.)] The process 
\begin{align*}
&f(H^n)-f(X_0)-\sum_{i=1}^d \left(\frac{\partial}{\partial x^i}f(X^n_{-}) \right) \Si (B^n)^i\\
&-\frac{1}{2}  \sum_{i,j=1}^d \left(\frac{\partial^2}{\partial x^i \partial x^j}f(X^n_{-}) \right) \Si  (C^n)^{ij}\\
&-\left[ f(X^n_{-}+x)-f(X^n_{-})- \sum_{i=1}^d \left(\frac{\partial}{\partial x^i}f(X^n_{-}) \right) h((X^n))^i \right]\ast \nu^{n}\\
&-\Delta f(H^n)\Si A
\end{align*}
is a local martingale for every $n \in \N$ and every function $f\in \mathcal{C}^2_b(\R^d)$.
\end{itemize}
\end{proposition}

\section{The Symbol of a Stochastic Process} \label{sec:symbol}

The so called `symbol' of a stochastic process is another concept, which has been invented in the context of Markov processes, and which - in a way - belongs naturally to the theory of semimartingales.  The symbol has proved to be useful in order to derive properties of the process like conservativeness (cf. \cite{schilling98pos}, Theorem 5.5), strong $\gamma$-variation (cf. \cite{sdesymbol} Corollary 5.10), Hausdorff-dimension (cf. \cite{schilling98hdd}, Theorem 4),  H\"older conditions \cite{schilling98}, ultracontractivity of semigroups \cite{schillingwang}, laws of iterated logarithm \cite{schillingknopova} and stationary distributions of Markov processes \cite{behmeschnurr}. 

Again, we do not describe the whole history, but focus on the transition from Markov processes to semimartingales. 

By a classical result due to \ Courr\`ege \cite{courrege} it is known that the generator of a rich Feller process (restricted to the test functions $C_c^\infty(\bbr^d)$) is a \emph{pseudo differential operator}, i.e.,\ $A$ can be written as
\begin{gather} \label{pseudo}
    Au(x)= - \int_{\bbr^d} e^{ix'\xi} q(x,\xi) \widehat{u}(\xi) \,d\xi, \qquad u\in C_c^\infty(\bbr^d)
\end{gather}
where $\widehat{u}(\xi)=(2\pi)^{-d}\int e^{-iy'\xi}u(y) dy$ denotes the Fourier transform. The function $q:\bbr^d \times \bbr^d \to \bbc$ is locally bounded and, for fixed $x$, a continuous negative definite function in the sense of Schoenberg in the co-variable $\xi$ (cf. \cite{bergforst} Chapter II). This is equivalent to the fact that it admits a L\'evy-Khintchine representation
\begin{align} \label{lkfx}
  q(x,\xi)=
  -i \ell'(x)  \xi + \frac{1}{2} \xi'Q(x) \xi -\int_{w\neq 0} \left( e^{i \xi' w} 
  -1 - i \xi' w \cdot \chi(w)\right)N(x,dw)
\end{align}
where  $\ell(x)=(\ell^{(1)}(x),...,\ell^{(d)}(x))'$, $Q(x)$, $N(x,dw)$ are as above in \eqref{chars} and $\chi:\bbr^d\to\bbr$ is a cut-off function. The function $q:\bbr^d\times \bbr^d\to \bbc$ which is often written as $q(x,\xi)$ is called the \emph{symbol} of the operator. For details on the rich theory of the interplay between processes and their symbols we refer the reader to \cite{niels1,niels2,niels3} and for a survey on recent results to \cite{levymatters3}. In the special case of L\'evy processes, the symbol and the characteristic exponent coincide.  In the framework of rich Feller processes, the conditions \eqref{growth} and \eqref{sector} play an important role: 
The growth condition is fulfilled, if there exists a $c>0$ such that
\begin{align} \tag{G} \label{growth}
\norm{q(\cdot,\xi)}_\infty \leq c(1+\norm{\xi}^2)
\end{align}
for every $\xi\in\bbr^d$. The sector condition, which is needed only for some of the results, is fulfilled, if there exists a $c_0>0$ such that for every $x,\xi\in\bbr^d$
\begin{align} \tag{S} \label{sector}
\abs{\Im (q(x,\xi))}\leq c_0 \Re (p(x,\xi))
\end{align}
where $\Re$ resp. $\Im$ denote the real resp. the imaginary part of the function. 

There exists another perspective on the symbol: In \cite{nielsursprung} Jacob came up with the idea to use a probabilistic formula in order to calculate the (functional analytic) symbol defined above:
\begin{align} \label{classicprobsymbol} 
     p(x,\xi):=- \lim_{t\downarrow 0} \frac{\bbe^x e^{i(X_t-x)'\xi}-1}{t}.
\end{align}
The difference to the formulas \eqref{stoppedsymbol} and \eqref{stoppedsymbolkilling} is that no stopping time has been used and no killing was included. In \cite{schilling98pos} the classical formula has been generalized to rich Feller processes satisfying the properties \eqref{growth} and \eqref{sector}. The focus in this work still was, to present a new way to calculate the functional analytic symbol $q(x,\xi)$ in a context, where it already existed.  Leaving the Feller property and \eqref{growth} behind, the symbol for quite general Markov processes was calculated in \cite{mydiss} Chapter 4, namely for It\^o processes. Unlike in earlier papers, the proof relied on the semimartingale structure in particular on the semimartingale characteristics \eqref{chars}.  On the other hand, the earlier results where included, since, as we have mentioned above, every rich Feller process is an It\^o process. The new idea of proof opened the door to define the symbol for general semimartingales (cf. \cite{mydiss}, Definition 4.3):
\begin{definition} \label{def:symbol}
Let $X$ be a stochastic process starting in $x$,  which is conservative, that is, the process does not admit a killing. Fix a starting point $x$ and define $\sigma=\sigma^x_k$ to be the first exit time from a compact neighborhood $K:=K_x$ of $x$: 
  \[
    \sigma:=\inf\{t\geq 0 : X_t^x \notin K \}.
  \]

For $\xi \in \bbr^d$ we call $p:\bbr^d\times \bbr^d\to \bbc$ given by
\begin{align} \label{stoppedsymbol} 
     p(x,\xi):=- \lim_{t\downarrow 0}\bbe^x \frac{e^{i(X^\sigma_t-x)'\xi}-1}{t}
\end{align}
the \emph{(probabilistic) symbol} of the process, if the limit exists and coincides for every choice of $K$.
\end{definition}

\begin{theorem} \label{thm:stoppedsymbol}
Let $X$ be a h.d.w.j. such that the differential characteristics $\ell$, $Q$ and $n$ are continuous.
In this case the limit \eqref{stoppedsymbol} exists and the symbol of $X$ is 
\begin{align}\label{symbol}
  p(x,\xi)=-i\ell(x)'\xi + \frac{1}{2} \xi'Q(x) \xi -\int_{y\neq 0} \Big(e^{iy'\xi}-1 -iy'\xi\cdot\chi(y)\Big) \ N(x,dy).
\end{align}
\end{theorem}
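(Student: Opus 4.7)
The plan is to apply the complex It\^o formula to the bounded smooth function $f(y):=e^{i(y-x)'\xi}$ along the stopped semimartingale $X^\sigma$, and extract the claimed limit from the resulting compensator representation of $\bbe^x[f(X^\sigma_t)-f(x)]$. The key input is the canonical representation of Theorem~\ref{II 2.34} combined with the h.d.w.j.\ form \eqref{chars}: on $\ls 0,\sigma\rs$ the process has characteristics $\int_0^\cdot \ell(X_s)\,ds$, $\int_0^\cdot Q(X_s)\,ds$, and jump compensator $N(X_s,dw)\,ds$.

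Applying It\^o's formula (noting the first derivative is $i\xi f$ and the Hessian is $-\xi\xi' f$) and gathering the continuous-martingale and compensated-jump pieces into a local martingale $M$, I would obtain the decomposition
\[
f(X^\sigma_t)-f(x)=M_t-\int_0^{t\wedge \sigma} f(X_{s-})\,p(X_s,\xi)\,ds,
\]
where $p(\cdot,\xi)$ is exactly the right-hand side of \eqref{symbol}. This identification is purely algebraic: the It\^o drift $i\xi'\ell(X_s)-\tfrac{1}{2}\xi' Q(X_s)\xi+\int(e^{iw'\xi}-1-i\xi' w\chi(w))\,N(X_s,dw)$, multiplied by $f(X_{s-})$, equals $-f(X_{s-})\,p(X_s,\xi)$. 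The next task is to verify that $M$ is a true martingale so that $\bbe^x M_t=0$. Because $X_{s-}\in K$ on $\ls 0,\sigma\rs$, continuity of $\ell$, $Q$, and $n$ on $K$ yields uniform bounds on the coefficients; together with the estimate $|e^{iw'\xi}-1-i\xi' w\chi(w)|\leq C(\xi)(|w|^2\wedge 1)$ and the integrability of $|w|^2\wedge 1$ against $N(\cdot,dw)$ inherited from the h.d.w.j.\ definition, the integrands lie in the appropriate martingale spaces.

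Taking expectations and dividing by $t$ produces
\[
\frac{1}{t}\,\bbe^x\bigl[e^{i(X^\sigma_t-x)'\xi}-1\bigr]=-\frac{1}{t}\,\bbe^x\!\left[\int_0^{t\wedge\sigma} f(X_{s-})\,p(X_s,\xi)\,ds\right].
\]
Since $x$ lies in the interior of $K$ and the paths are \cad, one has $\sigma>0$ almost surely, so $\bbp^x(\sigma\leq t)\to 0$ as $t\downarrow 0$. Right-continuity of $X$ at $0$ together with continuity of $\ell$, $Q$, $n$ gives $f(X_{s-})\to 1$ and $p(X_s,\xi)\to p(x,\xi)$ as $s\downarrow 0$, while $|f|\leq 1$ and the uniform boundedness of $p(\cdot,\xi)$ on $K$ dominate the integrand. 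Bounded convergence then passes the limit inside the integral and the expectation, yielding the value $-p(x,\xi)$ and hence \eqref{stoppedsymbol}. Independence of the choice of $K$ is automatic since the limit does not involve $K$.

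The main obstacle is controlling the compensated-jump part of $M$ near time $\sigma$, where the stopped process may perform a single jump of arbitrary size out of $K$. The remedy is to split the $N(X_s,dw)$-integral: small jumps ($|w|\leq 1$) are handled via the $|w|^2$ estimate together with $\int_{|w|\leq 1}|w|^2 N(X_s,dw)<\infty$ uniformly for $X_s\in K$, while large jumps ($|w|>1$) are bounded by $|e^{iw'\xi}-1|\leq 2$ combined with $\int_{|w|>1}N(X_s,dw)<\infty$ uniformly on $K$, a consequence of continuity of $n$. This uniform integrability both secures the martingale identification and legitimizes the bounded-convergence argument at the final step.
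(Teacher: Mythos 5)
Your proposal is correct and follows essentially the same route as the paper's proof (which is only cited here, from \cite{mydiss} and \cite{generalizedindices}, but is explicitly described as relying on the semimartingale characteristics \eqref{chars}): apply It\^o's formula to $e^{i(\cdot-x)'\xi}$ along the stopped semimartingale, identify the finite-variation drift with $-\int_0^{t\wedge\sigma}e^{i(X_{s-}-x)'\xi}p(X_s,\xi)\,ds$, kill the martingale part in expectation using the boundedness of the differential characteristics on the compact $K$, and conclude by dominated convergence as $t\downarrow 0$. Your handling of the large-jump compensation and of the event $\{\sigma\leq t\}$ matches the standard argument, so no gap remains.
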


In fact, it is enough that the differential characteristics are locally bounded and finely continuous. Fine continuity has its origins in the framework of Markov processes (cf. \cite{blumenthalget} Section II.4 and \cite{fuglede}). Classical continuity is general enough for all practical purposes. Since we have dealt with killing in the framework of semimartingales already, we can also consider the following - more general - version of the symbol, as it has been introduced in \cite{fourthchar}.

Dealing with the symbol, we could work on $E$ with its relative topology. We make things a bit easier by prolonging the process to $\bbr^d$ by setting $X_t:=x$ for $x\in \bbr^d \backslash E$ and $t\geq 0$. Hence, from now on we assume that our processes live on $\bbr^d$ respectively on $\rd=\bbr^d\cup \{\infty,\Delta\}$. Starting with a process on $E$ local boundedness and fine continuity of the differential characteristic are not harmed by this extension. 
Furthermore, we write for $\xi\in\bbr^d$
\[
  e_\xi(x):=\begin{cases}e^{i x'\xi}& \text{if } x\in \bbr^d \\ 0 &\text{if } x\in\{ \infty, \Delta \}. \end{cases}
\]

\begin{definition} \label{def:symbol}
    Let $X$ be an $\rd$-valued semimartingale, with respect to $\bbp^x$ for every $x\in\bbr^d$. Fix a starting point $x\in\bbr^d$ and let $K\subseteq \bbr^d$ be a compact neighborhood of $x$. Define $\sigma$ to be the first exit time of $X$ from $K$:
    \begin{gather} \label{stopping}
        \sigma:=\sigma^x_K:=\inf\big\{t\geq 0 : X_t\in \rd \backslash K  \big\}.
    \end{gather}
    The function $p:\bbr^d\times \bbr^d \rightarrow \bbc$ given by
    \begin{gather} \label{stoppedsymbolkilling}
         p(x,\xi):= -\lim_{t\downarrow 0}  \frac{\bbe^x\Big(e_\xi(X_t^\sigma-x)  - 1\Big)}{t}   
    \end{gather}
    is called the \emph{(probabilistic) symbol of the process}, if the limit exists for every $x\in \bbr^d$, $\xi\in\bbr^d$ independently of the choice of $K$.
\end{definition}

If we need the symbol on $\rd$, it is defined as follows: in $\Delta$ it is zero and in $\infty$ it is the local killing rate (starting in $\infty$): 
$\lim_{h\downarrow 0} \frac{\bbp^\infty(X_h = \Delta)}{h}$.
The following result is Theorem 2.18 in \cite{fourthchar}. 


\begin{theorem} \label{thm:stoppedsymbolkilling}
Let $X$ be an autonomous semimartingale on $\widetilde{\bbr^d}$ such that the differential characteristics $a$, $\ell$, $Q$ and $n$ are  continuous for every $\bbp^x$ $(x\in \bbr^d)$.
In this case the limit \eqref{stoppedsymbol} exists and the symbol of $X$ is 
\begin{align}\label{symbolkilling}
  p(x,\xi)=a(x) -i\ell(x)'\xi + \frac{1}{2} \xi'Q(x) \xi -\int_{y\neq 0} \Big(e^{iy'\xi}-1 -iy'\xi\cdot\chi(y)\Big) \ N(x,dy).
\end{align}
\end{theorem}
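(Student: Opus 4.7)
The plan is to decompose the expectation according to whether $X$ has been killed by time $t\wedge\sigma$, reducing the unkilled contribution to the computation carried out in Theorem \ref{thm:stoppedsymbol} and extracting the killed contribution via the fourth characteristic.

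Since any jump to $\Delta$ removes $X$ from the compact $K$ we have $\sigma \leq \zd$ a.s., and since $\|X_s\|\to\infty$ as $s\uparrow\zi$ we have $\sigma < \zi$ on $\{\zi<\infty\}$, so $\bbp^x(X_t^\sigma = \infty)=0$. Using $e_\xi(\Delta)=0$,
\[
\bbe^x\!\left[e_\xi(X_t^\sigma - x) - 1\right] = \bbe^x\!\left[(e_\xi(X_t^\sigma - x) - 1)\mathbf 1_{\{X_t^\sigma \in \bbr^d\}}\right] - \bbp^x(\zd \leq t \wedge \sigma).
\]
For the killed part: by Theorem \ref{Schnurr2.7}(iv) the process $\mathbf 1_{\ls \zd,\infty\ls}-A$ is a local martingale, and after stopping at $t\wedge \sigma$ both pieces are bounded, so optional stopping together with autonomy yields $\bbp^x(\zd \leq t \wedge \sigma) = \bbe^x\!\left[A_{t\wedge \sigma}\right] = \bbe^x\!\left[\int_0^{t\wedge\sigma} a(X_s)\,ds\right]$. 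Using that $\sigma > 0$ a.s., right-continuity of $X$ at $0$, continuity of $a$, and dominated convergence, division by $t$ and $t\downarrow 0$ give the limit $a(x)$.

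For the alive part: on $\{X_t^\sigma \in \bbr^d\}$ one has $t\wedge\sigma < \zd$, so $X^{\sigma}$ agrees there with a classical semimartingale whose characteristics, by autonomy, Theorem \ref{thm:goodversion}, and continuity, take the absolutely continuous form $(\int_0^\cdot \ell(X_s)\,ds,\int_0^\cdot Q(X_s)\,ds,N(X_s,dy)\,ds)$. The canonical representation of Theorem \ref{II 2.34} reduces on this set to the classical one (the $\Delta$-term vanishes); applying the It\^o formula to $y\mapsto e^{i(y-x)'\xi}$ and taking expectations annihilates the bounded martingale parts, leaving precisely the drift, quadratic and compensated jump terms treated in the proof of Theorem \ref{thm:stoppedsymbol}. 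Divide by $t$, let $t\downarrow 0$, and invoke continuity of $\ell$, $Q$ and $N(\cdot,dy)$ at $x$ to obtain
\[
\tfrac1t\,\bbe^x\!\left[(e_\xi(X_t^\sigma - x) - 1)\mathbf 1_{\{X_t^\sigma \in \bbr^d\}}\right] \longrightarrow i\ell(x)'\xi - \tfrac{1}{2}\xi'Q(x)\xi + \int_{y\neq 0}\!\!\bigl(e^{iy'\xi}-1-iy'\xi\chi(y)\bigr) N(x,dy).
\]
Combining the two limits and taking the overall minus sign in the definition of $p$ reproduces \eqref{symbolkilling}.

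The main obstacle is the analytic control of the compensated jump integral as $t\downarrow 0$: one splits it into a small-jump part, handled by the $L^2$-isometry of the compensated jump measure together with $\int(1\wedge\|y\|^2)N(x,dy)<\infty$ and continuity of $x\mapsto N(x,\cdot)$ tested against bounded continuous functions, and a large-jump part of finite intensity, handled by right-continuity of paths and dominated convergence. A minor bookkeeping issue is the passage between the truncation function $h$ used in Definition \ref{def:chars} and the cut-off $\chi$ appearing in \eqref{symbolkilling}; writing $h(y) = y\,\chi(y) + (h(y)-y\,\chi(y))$ absorbs the discrepancy into a bounded, compactly supported correction that is $o(t)$ after expectation.
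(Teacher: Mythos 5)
A preliminary remark: the paper itself does not prove Theorem \ref{thm:stoppedsymbolkilling} --- it is quoted from Theorem 2.18 of \cite{fourthchar}, and the appendix only supplies proofs for Section \ref{sec:chars} --- so your attempt can only be measured against the strategy of that cited proof. Its architecture (isolate the killing contribution via the fourth characteristic, reduce the pre-killing contribution to Theorem \ref{thm:stoppedsymbol}) is indeed the right one, and several of your steps are correct as stated: $\sigma\le\zd$ and $\sigma<\zi$ on $\{\zi<\infty\}$, hence $\bbp^x(X_t^\sigma=\infty)=0$; the decomposition of the expectation using $e_\xi(\Delta)=0$; and the killed part, where $(1_{\ls \zd,\infty \ls}-A)^{t\wedge\sigma}$ is a bounded local martingale (since $a$ is bounded on the compact $K$), so optional stopping and dominated convergence legitimately give $\tfrac1t\bbp^x(\zd\le t\wedge\sigma)\to a(x)$. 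The worry about $h$ versus $\chi$ is moot, since the paper fixes $h=\chi\cdot id$.

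The gap is in the alive part. You cannot ``apply the It\^o formula and take expectations'' restricted to the event $\{X_t^\sigma\in\bbr^d\}=\{t\wedge\sigma<\zd\}$: this event is not $\mathcal{F}_0$-measurable, so martingale increments do not have vanishing expectation on it, and the statement that the canonical representation ``reduces to the classical one on this set'' is pathwise and does not survive the restricted expectation. Concretely, writing $Y$ for the pre-killed semimartingale,
\[
\bbe^x\bigl[(e^{i(X_{t\wedge\sigma}-x)'\xi}-1)1_{\{t\wedge\sigma<\zd\}}\bigr]
=\bbe^x\bigl[e^{i(Y_{t\wedge\sigma}-x)'\xi}-1\bigr]
-\bbe^x\bigl[(e^{i(Y_{t\wedge\sigma}-x)'\xi}-1)1_{\{\zd\le t\wedge\sigma\}}\bigr],
\]
and only the first term is the quantity computed in Theorem \ref{thm:stoppedsymbol}. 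The cross term couples the displacement of the process with the killing event; each factor is individually of order $t$, so Cauchy--Schwarz only gives $O(\sqrt t)\cdot O(\sqrt t)=O(t)$ rather than the required $o(t)$, and a ``two rare events'' argument via conditioning at a stopping time is unavailable because an autonomous semimartingale need not be Markovian. The standard repair is to keep the killing inside the stochastic calculus instead of conditioning on it: write $e_\xi(X_t-x)=e^{i(Y_t-x)'\xi}\,(1-1_{\ls \zd,\infty \ls}(t))$, apply the product rule, and observe that the killing enters through $-\int_0^{t\wedge\sigma}e^{i(Y_{s-}-x)'\xi}\,dA_s$ plus a martingale; the \emph{predictable} integrand tends to $1$, which produces the $a(x)$ term, while the residual contribution is dominated by $\bbe^x\int_0^{t\wedge\sigma}(\|Y_{s-}-x\|\wedge 2)\,dA_s=o(t)$. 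This is precisely the role the fourth characteristic plays in part (b.) of the proposition following Notation \ref{NotL} and in the proof in \cite{fourthchar}; your sketch needs this step to become a proof.
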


As in other contexts, it was possible to leave the Markov property behind. Indeed, it would have been possible to define the symbol directly for semimartingales, without any knowledge of the connection to the generator in the Markovian framework.  
Coming from the Feller process theory, the formula \eqref{classicprobsymbol} can be interpreted as follows: one plugs the function $y \mapsto e^{i(y-x)'\xi}$ which is bounded, measurable and complex valued into the generator \eqref{generator}. However, without this interpretation, the probabilistic symbol can still be considered a natural object in order to study the behavior of the process: if we forget about the minus sign for a second,  $p(x,\xi)$ is the state space dependent right hand side derivative (in time) of the characteristic function of the process (cf. \eqref{classicprobsymbol}). Since the characteristic function offers a unique way to describe the distribution at a certain point in time,  it is a natural idea to analyze the infinitesimal change of this function in order to derive properties of the process.  In the next section we will see some examples on how the symbol can be used to do this. 

\section{Blumenthal-Getoor Indices}
For $\alpha$-stable processes, there exists a natural index which can be related to different properties of the process (like Hausdorff dimension of the paths, strong variation, ...).  The analysis of the interplay between the stability index $\alpha$ and properties of this kind dates back to Bochner \cite{bochner} and McKean \cite{mckean}.  Having generalized these results to the multivariate framework in \cite{blumenthalget60}, Blumenthal and Getoor \cite{blumenthalget61} introduced in 1961 the indices which were named after them. These indices allowed to analyse more general L\'evy processes.  In \cite{pruitt}, Pruitt introduced another index $\gamma$ which complements the aforementioned indices.  Schilling generalized all of these indices to rich Feller processes satisfying \eqref{growth} and \eqref{sector}. Since the space homogeneity could be left behind, this has probably been the biggest jump in the development of indices of this kind (cf. \cite{schilling98}). 

In order to define the generalized Blumenthal-Getoor indices one uses the following quantities for $x\in\bbr^d$ and $R>0$:
\begin{align}
H(x,R)&:= \sup_{\norm{y-x}\leq 2R} \; \sup_{\norm{\varepsilon}\leq 1} \abs{p\left(y,\frac{\varepsilon}{R}\right)} \\
H(R)&:= \sup_{y\in\bbr^d} \sup_{\norm{\varepsilon}\leq 1} \abs{p\left(y,\frac{\varepsilon}{R}\right)} \\ 
h(x,R)&:= \inf_{\norm{y-x}\leq 2R} \;  \sup_{\norm{\varepsilon}\leq 1} 
\Re \left( p\left(y,\frac{\varepsilon}{4\kappa R} \right)\right) \label{h}\\
h(R)&:= \inf_{y\in\bbr^d} \sup_{\norm{\varepsilon}\leq 1} 
\Re \left( p\left(y,\frac{\varepsilon}{4\kappa R} \right)\right) \label{h2}
\end{align}
In \eqref{h} and \eqref{h2} $\kappa=(4 \arctan (1/2 c_0))^{-1}$ where $c_0$ comes from the sector condition \eqref{sector}.  In particular $h(x,R)$ and $h(R)$ are only defined if \eqref{sector} is satisfied and only in this case they are used. 

As one can see, the definition relies only on the symbol of the process. So, having generalized the symbol to the world of homogeneous diffusions with jumps, it was easy to define these quantities and the related indices in this more general framework. However, it was not clear, whether the close relationship between indices of this kind and properties of the process would remain valid. 

\begin{definition}
The quantities (cf. \cite{schilling98} Definitions 4.2 and 4.5)
\begin{align*}
\beta_0&:=\sup \left\{\lambda \geq 0 : \limsup_{R\to\infty} R^\lambda H(R) =0 \right \} \\
\underline{\beta_0}&:=\sup \left\{\lambda \geq 0 : \liminf_{R\to\infty} R^\lambda H(R) =0 \right \} \\
\overline{\delta_0}&:=\sup \left\{\lambda \geq 0 : \limsup_{R\to\infty} R^\lambda h(R) =0 \right \} \\
\delta_0&:=\sup \left\{\lambda \geq 0 : \liminf_{R\to\infty} R^\lambda h(R) =0 \right \}
\end{align*}
are called \emph{indices of $X$ at the origin}, while
\begin{align*}
\beta_\infty^x&:=\inf \left\{\lambda > 0 : \limsup_{R\to 0} R^\lambda H(x,R) =0 \right \} \\
\underline{\beta_\infty^x}&:=\inf \left\{\lambda > 0 : \liminf_{R\to 0} R^\lambda H(x,R) =0 \right \} \\
\overline{\delta_\infty^x}&:=\inf \left\{\lambda > 0 : \limsup_{R\to 0} R^\lambda h(x,R) =0 \right \} \\
\delta_\infty^x&:=\inf \left\{\lambda > 0 : \liminf_{R\to 0} R^\lambda h(x,R) =0 \right \}
\end{align*}
are the \emph{indices of $X$ at infinity}.
\end{definition}

In the case of symmetric $\alpha$-stable processes all indices coincide and they are equal to $\alpha$. For so called stable-like Feller processes (cf. \cite{bass88a},\cite{negoro94}) with uniformly bounded exponential function, that is, $0<\alpha_0 \leq\alpha(x)\leq\alpha_\infty <1$ one obtains $\beta_0=\underline{\beta_0}=\alpha_0$ and $\delta_0=\overline{\delta_0}=\alpha_\infty$ (see \cite{schilling98} Example 5.5). 

In order to use the symbol and the related indices to derive properties of the process, the following proposition is the key ingredient.  Similar results were proved for L\'evy processes by Pruitt in \cite{pruitt} and for rich Feller processes satisfying \eqref{growth} and \eqref{sector} by Schilling in \cite{schilling98}. The version presented here ist taken from Schnurr \cite{generalizedindices}.  Surprisingly enough, the Markov property was not needed in order to derive results of this kind. We write
\[
(X_\cdot-x)_t^*:=\sup_{s\leq t} \norm{X_s-x}, \quad t \geq 0
\]
for the maximum process.

\begin{proposition} \label{prop:techmain}
Let $X$ be a h.d.w.j. such that the differential characteristics of $X$ are locally bounded and finely continuous. In this case we have
\begin{align} \label{firstestimate}
\bbp^x\Big((X_\cdot - x)_t^* \geq R \Big) \leq c_d \cdot t \cdot H(x,R)
\end{align}
for $t\geq 0$, $R>0$ and a constant $c_d>0$ which can be written down explicitly and only depends on the dimension $d$. \\
If \eqref{sector} holds in addition we have
\begin{align} \label{secondestimate}
\bbp^x\Big((X_\cdot -x)_t^* <R\Big) \leq c_\kappa \cdot \frac{1}{t} \cdot \frac{1}{h(x,R)}
\end{align}
for a constant $c_\kappa$ only depending on the $c_0$ of the sector condition \eqref{sector}.
\end{proposition}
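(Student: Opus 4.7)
My plan is to derive both estimates from a single master identity. Define the exit time
$$\sigma := \sigma_R^x := \inf\{t\geq 0:\|X_t-x\|>R\}\wedge\inf\{t\geq 0:\|X_{t-}-x\|>R\},$$
so that $\{(X_\cdot-x)_t^*\geq R\}\subseteq\{\sigma\leq t\}$, $\{(X_\cdot-x)_t^*<R\}\subseteq\{\sigma>t\}$, and $\|X_s-x\|\leq R$ for $s\in[0,\sigma]$. I apply It\^o's formula (or equivalently Proposition \ref{�quiSem}(c)) to the bounded function $f_\xi(y):=e^{i\xi'(y-x)}$ and the stopped process $X^\sigma$; reading off the compensator via the L\'evy--Khintchine form of the characteristics, and using Theorem \ref{thm:stoppedsymbol} together with the local boundedness of the continuous differential characteristics on $\overline{B(x,2R)}$, the integrand simplifies to $-p(X_s,\xi)\,e^{i\xi'(X_s-x)}$. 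Taking expectations and using the optional stopping theorem yields the master identity
$$1-\bbe^x e^{i\xi'(X_{t\wedge\sigma}-x)}=\bbe^x\int_0^{t\wedge\sigma}p(X_s,\xi)\,e^{i\xi'(X_s-x)}\,ds.$$

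For \eqref{firstestimate} I take real parts in the master identity and integrate against $d\xi$ over $\{\|\xi\|\leq 1/R\}$. By Fubini the right-hand side is dominated by $t\cdot|B(0,1/R)|\cdot H(x,R)$, since on $[0,\sigma]$ one has $\|X_s-x\|\leq 2R$. The left-hand side equals $\bbe^x\int_{\|\xi\|\leq 1/R}(1-\cos(\xi'(X_{t\wedge\sigma}-x)))\,d\xi$; the elementary inequality $\int_{\|\xi\|\leq 1/R}(1-\cos(\xi'y))\,d\xi\geq c_d'\,R^{-d}$ valid for $\|y\|\geq R$ (obtained by the change of variable $\xi=\eta/R$) bounds it below by $c_d'\,R^{-d}\,\bbp^x((X_\cdot-x)_t^*\geq R)$. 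Cancelling the common factor $R^{-d}$ produces \eqref{firstestimate}.

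For \eqref{secondestimate} I work under \eqref{sector}, fix $\xi$ with $\|\xi\|\leq 1$, and set $\eta:=\xi/(4\kappa R)$. On $\{\sigma>t\}$ one has $|\eta'(X_s-x)|\leq 1/(4\kappa)$ for every $s\leq t$, and the calibration $1/(4\kappa)=\arctan(1/(2c_0))$ combined with \eqref{sector} and $\mathrm{Re}\,p\geq 0$ yields
$$\mathrm{Re}\bigl(e^{i\eta'(X_s-x)}p(X_s,\eta)\bigr)\geq \bigl(\cos(1/(4\kappa))-c_0\sin(1/(4\kappa))\bigr)\mathrm{Re}\,p(X_s,\eta)=\tfrac{1}{2}\cos(1/(4\kappa))\,\mathrm{Re}\,p(X_s,\eta).$$
Taking real parts in the master identity and using $|1-e^{i\eta'(\cdot)}|\leq 2$, I obtain
$$2\geq \tfrac{1}{2}\cos(1/(4\kappa))\cdot t\cdot \bbp^x(\sigma>t)\cdot \inf_{\|y-x\|\leq 2R}\mathrm{Re}\,p(y,\eta),$$
and conclude via $\bbp^x((X_\cdot-x)_t^*<R)\leq\bbp^x(\sigma>t)$.

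The main obstacle is the last optimisation over $\xi$. The master identity forces $\xi$ to be deterministic, whereas $h(x,R)=\inf_y\sup_\varepsilon\mathrm{Re}\,p(y,\varepsilon/(4\kappa R))$ has the supremum \emph{inside}, so a naive choice of $\xi$ only yields the weaker $\sup_\xi\inf_y$ quantity. Upgrading this bound to $h(x,R)$ requires combining the sector condition (so that $\mathrm{Re}\,p$ controls $|p|$ and the pointwise sup over $\varepsilon$ is comparable to its values on a finite net) with continuity of the differential characteristics, which permits a uniform approximation or measurable-selection argument on the compact slice $\overline{B(x,2R)}\times\overline{B(0,1)}$. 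Modulo this analytic point, the rest of the argument is routine semimartingale calculus.
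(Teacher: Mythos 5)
Your first estimate follows the Pruitt--Schilling route that the paper's source \cite{generalizedindices} also takes: the master identity $1-\bbe^x e^{i\xi'(X_{t\wedge\sigma}-x)}=\bbe^x\int_0^{t\wedge\sigma}p(X_s,\xi)e^{i\xi'(X_s-x)}\,ds$ is exactly the tool used there (legitimate under local boundedness of the differential characteristics, with $\|X_\sigma-x\|\ge R$ on $\{\sigma<\infty\}$ by right-continuity), and integrating over $\{\|\xi\|\le 1/R\}$ against the elementary lower bound $\int_{\|\xi\|\le 1/R}(1-\cos(\xi'y))\,d\xi\ge c_d' R^{-d}$ for $\|y\|\ge R$ is the standard conclusion. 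One minor patch: with your strict-inequality definition of $\sigma$ the inclusion $\{(X_\cdot-x)_t^*\ge R\}\subseteq\{\sigma\le t\}$ can fail on the event where the running maximum equals $R$ without ever exceeding it; define $\sigma$ with $\ge R$ or prove the bound for $R'<R$ and let $R'\uparrow R$.

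The second estimate has a genuine gap, which you correctly flag but then propose to close with the wrong tool. The minimax defect is real: for each fixed $\eta=\varepsilon/(4\kappa R)$ your argument yields a bound with $\inf_{\|y-x\|\le R}\Re\, p(y,\eta)$ in the denominator, and optimising over $\varepsilon$ only produces $\sup_\varepsilon\inf_y\le\inf_y\sup_\varepsilon=h(x,R)$. A finite net in $\varepsilon$ plus continuity of the characteristics cannot repair this: you would still need one net point that works simultaneously for all $y$ in the ball, which is the same $\sup\inf$ versus $\inf\sup$ problem, and a measurable selection $y\mapsto\varepsilon(y)$ cannot be inserted into the master identity, which requires a deterministic $\xi$. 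The step actually used in \cite{generalizedindices} (following \cite{schilling98}) is structural: because $\xi\mapsto p(y,\xi)$ is continuous negative definite, its real part satisfies the pointwise-in-$y$ averaging inequality $|B(0,\rho)|^{-1}\int_{B(0,\rho)}\Re\, p(y,\xi)\,d\xi\ge c_d\sup_{\|\xi\|\le\rho}\Re\, p(y,\xi)$, which one checks separately on the Gaussian part and on $\int(1-\cos(\xi'w))\,N(y,dw)$ via $1\wedge s^2$ comparisons. Integrating your master identity over $\xi\in B(0,1/(4\kappa R))$, applying your sector-condition bound $\Re\bigl(e^{i\theta}p\bigr)\ge\tfrac12\cos(1/(4\kappa))\,\Re\, p$ under that integral, and then invoking this averaging lemma turns the integrand into $c\sup_{\|\xi\|\le 1/(4\kappa R)}\Re\, p(X_s,\xi)$, after which taking $\inf_y$ over the ball is harmless and yields $h(x,R)$. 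Without this lemma your argument proves only the weaker $\sup\inf$ version of \eqref{secondestimate}.
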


Using this result and standard Borel-Cantelli techniques one obtains the following two theorems which describe the behavior of the process at infinity respectively zero. Let us mention again, that no Markov property is needed in order to derive these results. The whole theory relies nowadays on the semimartingale structure. 

\begin{theorem} \label{thm:ttoinfty}
Let $X$ be a h.d.w.j. such that the differential characteristics of $X$ are locally bounded and finely continuous. Then we have
\begin{align}
\lim_{t\to\infty} t^{-1/\lambda}(X_\cdot-x)_t^*=0 & \text{ for all } \lambda < \beta_0 \\
\liminf_{t\to\infty} t^{-1/\lambda}(X_\cdot-x)_t^*=0 & \text{ for all } \beta_0 \leq \lambda < \underline{\beta_0}.
\end{align}
If the symbol $p$ of the process $X$ satisfies \eqref{sector} then we have in addition
\begin{align}
\limsup_{t\to\infty} t^{-1/\lambda}(X_\cdot-x)_t^*=\infty & \text{ for all } \overline{\delta_0} < \lambda \leq \delta_0 \\
\lim_{t\to\infty} t^{-1/\lambda}(X_\cdot-x)_t^*=\infty & \text{ for all } \delta_0 < \lambda.
\end{align}
All these limits are meant $\bbp^x$-a.s with respect to every $x\in\bbr^d$.
\end{theorem}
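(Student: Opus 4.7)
The plan is to derive all four statements by combining the estimates \eqref{firstestimate} and \eqref{secondestimate} of Proposition \ref{prop:techmain} with the first Borel--Cantelli lemma along a suitably chosen deterministic sequence $t_n\uparrow\infty$, then interpolating via the monotonicity of $t\mapsto (X_\cdot-x)_t^*$ to pass from the discrete sequence back to a continuous limit. In each case one translates the statement about $t^{-1/\lambda}(X_\cdot-x)_t^*$ into one about $(X_\cdot-x)_t^*$ crossing the threshold $\delta t^{1/\lambda}$ or $M t^{1/\lambda}$, and then substitutes this into the appropriate estimate.

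The two upper-bound claims use \eqref{firstestimate}. For $\lambda<\beta_0$, pick $\lambda'\in(\lambda,\beta_0)$; by the definition of $\beta_0$ we have $H(R)\leq C R^{-\lambda'}$ for all large $R$. Along the dyadic times $t_n:=2^n$ and with $R_n:=\delta t_n^{1/\lambda}$,
\[
\bbp^x\!\left((X_\cdot-x)_{t_n}^*\geq \delta t_n^{1/\lambda}\right)\leq c_d\,t_n\,H(R_n)\leq c_d C\,\delta^{-\lambda'}\,t_n^{1-\lambda'/\lambda},
\]
which is summable since $\lambda'/\lambda>1$. Borel--Cantelli plus monotonicity then yield $\limsup_t t^{-1/\lambda}(X_\cdot-x)_t^*\leq 2^{1/\lambda}\delta$ almost surely, and sending $\delta\downarrow 0$ along a countable sequence (unioning the null sets) gives the first claim. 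The $\liminf$ case for $\beta_0\leq\lambda<\underline{\beta_0}$ is structurally identical: choose $\lambda'\in(\lambda,\underline{\beta_0})$ and, using $\liminf_R R^{\lambda'}H(R)=0$, extract a sparse sequence $R_n\uparrow\infty$ with $R_n^{\lambda'}H(R_n)\leq n^{-2}$; setting $t_n:=\delta^{-\lambda}R_n^\lambda$, the bound $t_n H(R_n)=\delta^{-\lambda}R_n^{\lambda-\lambda'}\cdot R_n^{\lambda'}H(R_n)$ is summable since $\lambda<\lambda'$ and $R_n\to\infty$.

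The sector-condition claims mirror the above using \eqref{secondestimate}. For the $\limsup=\infty$ case with $\overline{\delta_0}<\lambda\leq\delta_0$, choose $\lambda'\in(\overline{\delta_0},\lambda)$; since $\limsup_R R^{\lambda'}h(R)>0$, extract (and thin geometrically so that $R_{k+1}\geq 2R_k$) a subsequence $R_k\uparrow\infty$ with $R_k^{\lambda'}h(R_k)\geq \varepsilon$. Setting $t_k:=M^{-\lambda}R_k^\lambda$,
\[
\bbp^x\!\left((X_\cdot-x)_{t_k}^*\leq M t_k^{1/\lambda}\right)\leq \frac{c_\kappa M^\lambda}{R_k^\lambda h(R_k)}\leq \frac{c_\kappa M^\lambda}{\varepsilon\,R_k^{\lambda-\lambda'}}
\]
is summable since $\lambda>\lambda'$ and $R_k$ grows geometrically. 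Borel--Cantelli produces $t_k^{-1/\lambda}(X_\cdot-x)_{t_k}^*\geq M$ eventually, and letting $M\to\infty$ yields $\limsup=+\infty$. The $\lim=+\infty$ case for $\lambda>\delta_0$ is stronger: pick $\lambda'\in(\delta_0,\lambda)$, use $\liminf_R R^{\lambda'}h(R)>0$ to obtain a uniform lower bound $R^{\lambda'}h(R)\geq \varepsilon$ for all large $R$, and run the same argument along the full dyadic sequence $t_n=2^n$; monotonicity interpolation gives $t^{-1/\lambda}(X_\cdot-x)_t^*\geq 2^{-1/\lambda}M$ on $[t_n,t_{n+1}]$ eventually, so the liminf, and hence the limit, equals $+\infty$.

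The main obstacle is bookkeeping rather than mathematical depth: the almost-sure exceptional set produced by Borel--Cantelli must be made independent of the free parameter $\delta$ (resp.\ $M$), which is handled by running the argument along a countable dense set of parameter values and discarding the countable union of null events; separately, the subsequence $t_n$ extracted from the $\liminf/\limsup$ definitions of $\underline{\beta_0}$ and $\overline{\delta_0}$ must be rigged to be both strictly increasing and sparse enough for summability, which is achieved by standard geometric thinning and only strengthens the Borel--Cantelli bounds.
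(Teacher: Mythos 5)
Your argument is correct and is exactly the route the paper intends: it deduces all four limits from the estimates \eqref{firstestimate} and \eqref{secondestimate} of Proposition \ref{prop:techmain} via Borel--Cantelli along geometric (or suitably thinned) time sequences, with monotone interpolation of $(X_\cdot-x)_t^*$ and a countable union over the free parameters $\delta$ and $M$ --- the paper itself only states that the theorem follows from Proposition \ref{prop:techmain} by ``standard Borel--Cantelli techniques'' and defers the details to \cite{generalizedindices}. The only point worth noting is the (correct) implicit use of $H(x,R)\leq H(R)$ and $h(x,R)\geq h(R)$ to pass from the pointwise quantities in Proposition \ref{prop:techmain} to the uniform ones defining the indices at the origin.
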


\begin{theorem} \label{thm:tto0}
Let $X$ be a h.d.w.j. such that the differential characteristics of $X$ are locally bounded and finely continuous. Then we have
\begin{align}
\lim_{t\to 0 } t^{-1/\lambda}(X_\cdot-x)_t^*=0 & \text{ for all } \lambda > \beta_\infty^x \label{proved}\\
\liminf_{t\to 0 } t^{-1/\lambda}(X_\cdot-x)_t^*=0 & \text{ for all } \beta_\infty^x \geq \lambda > \underline{\beta_\infty^x}.
\end{align}
If the symbol $p$ of the process $X$ satisfies \eqref{sector} then we have in addition
\begin{align}
\limsup_{t\to 0 } t^{-1/\lambda}(X_\cdot-x)_t^*=\infty & \text{ for all } \overline{\delta_\infty^x} > \lambda \geq \delta_\infty^x \\
\lim_{t\to 0 } t^{-1/\lambda}(X_\cdot-x)_t^*=\infty & \text{ for all } \delta_\infty^x > \lambda.
\end{align}
All these limits are meant $\bbp^x$-a.s with respect to every $x\in\bbr^d$.
\end{theorem}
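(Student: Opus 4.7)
The plan is to mirror the proof of Theorem~\ref{thm:ttoinfty}, using Proposition~\ref{prop:techmain} as the key tool, but to reverse the roles of large and small scales: here the indices $\beta_\infty^x$, $\underline{\beta_\infty^x}$, $\overline{\delta_\infty^x}$, $\delta_\infty^x$ are governed by the behaviour of $H(x,\cdot)$ and $h(x,\cdot)$ at $R\downarrow 0$, and we study the maximum process at $t\downarrow 0$. Throughout I would work along the geometric sequence $t_n:=2^{-n}$ and exploit the elementary monotonicity $(X_\cdot-x)_{t_{n+1}}^*\leq(X_\cdot-x)_t^*\leq(X_\cdot-x)_{t_n}^*$ valid for $t\in[t_{n+1},t_n]$ to transport estimates from the grid to all intermediate $t$.

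For the two upper-bound assertions I would fix $\varepsilon>0$, set $R_n:=\varepsilon t_n^{1/\lambda}$, and pick an auxiliary exponent $\lambda'$ strictly between the relevant $\beta$-index and $\lambda$. The first estimate of Proposition~\ref{prop:techmain} then gives
\begin{equation*}
\bbp^x\!\left((X_\cdot-x)_{t_n}^*\geq R_n\right)\leq c_d\,t_n H(x,R_n)=c_d\,\varepsilon^{-\lambda'}\,t_n^{1-\lambda'/\lambda}\bigl(R_n^{\lambda'}H(x,R_n)\bigr).
\end{equation*}
For $\lambda>\beta_\infty^x$ the bracketed factor tends to zero as $n\to\infty$, so the right-hand side is summable over the grid; Borel--Cantelli together with the interpolation yields $\limsup_{t\downarrow 0}t^{-1/\lambda}(X_\cdot-x)_t^*\leq 2^{1/\lambda}\varepsilon$ almost surely, and letting $\varepsilon\downarrow 0$ through the rationals proves \eqref{proved}. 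For $\beta_\infty^x\geq\lambda>\underline{\beta_\infty^x}$ the bracket only vanishes along some subsequence $R_{n_k}\downarrow 0$, by definition of $\underline{\beta_\infty^x}$; setting $t_{n_k}:=(R_{n_k}/\varepsilon)^\lambda$ and thinning further to make the probabilities summable, Borel--Cantelli produces $\liminf_{t\downarrow 0}t^{-1/\lambda}(X_\cdot-x)_t^*\leq 2^{1/\lambda}\varepsilon$ almost surely, whence the $\liminf$-statement after $\varepsilon\downarrow 0$.

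The two lower-bound assertions under the sector condition are handled symmetrically via the second estimate of Proposition~\ref{prop:techmain}. With $K>0$, $R_n:=Kt_n^{1/\lambda}$, and an auxiliary exponent $\lambda''$ strictly between $\lambda$ and the relevant $\delta$-index,
\begin{equation*}
\bbp^x\!\left((X_\cdot-x)_{t_n}^*<R_n\right)\leq\frac{c_\kappa}{t_n h(x,R_n)}=\frac{c_\kappa K^{\lambda''}\,t_n^{\lambda''/\lambda-1}}{R_n^{\lambda''}h(x,R_n)}.
\end{equation*}
For $\lambda<\delta_\infty^x$ the definition of $\delta_\infty^x$ guarantees that $R^{\lambda''}h(x,R)$ stays bounded below by a positive constant near $R=0$, so the right-hand side is summable (since $\lambda''/\lambda>1$); Borel--Cantelli combined with the interpolation gives $\liminf_{t\downarrow 0}t^{-1/\lambda}(X_\cdot-x)_t^*\geq 2^{-1/\lambda}K$ almost surely, and sending $K\uparrow\infty$ yields the $\lim=\infty$ statement. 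For $\overline{\delta_\infty^x}>\lambda\geq\delta_\infty^x$ one only has $R_{n_k}^{\lambda''}h(x,R_{n_k})\geq c>0$ along a subsequence; setting $t_{n_k}:=(R_{n_k}/K)^\lambda$, thinning to a summable sub-subsequence and applying Borel--Cantelli delivers $\limsup\geq 2^{-1/\lambda}K$ almost surely, and $K\uparrow\infty$ finishes.

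The main obstacle is really administrative rather than conceptual. One must choose the auxiliary exponents $\lambda',\lambda''$ strictly between $\lambda$ and the target index so that the estimates of Proposition~\ref{prop:techmain} become summable along the dyadic grid, while still leaving room for the harmless interpolation constants $2^{\pm 1/\lambda}$ to be absorbed by sending $\varepsilon\downarrow 0$ or $K\uparrow\infty$. For the $\liminf$ and $\limsup$ clauses there is the additional subtlety that the bracketed quantities only vanish (resp.\ stay away from zero) along a subsequence, so one must first preselect a sparse enough subsequence of the grid before Borel--Cantelli can be invoked, and then verify that the monotonicity of the maximum process is still strong enough to transport the resulting bound to all intermediate $t$.
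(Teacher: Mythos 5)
Your proposal is correct and follows exactly the route the paper indicates: the paper gives no written proof of Theorem \ref{thm:tto0} but states that it follows from the maximal estimates of Proposition \ref{prop:techmain} by ``standard Borel--Cantelli techniques'' (citing \cite{generalizedindices}), which is precisely your dyadic-grid argument with the interpolation $(X_\cdot-x)^*_{t_{n+1}}\leq (X_\cdot-x)^*_t\leq (X_\cdot-x)^*_{t_n}$ and the auxiliary exponents $\lambda',\lambda''$. The one point worth making explicit in a write-up is that for the $\liminf$/$\limsup$ clauses the vanishing (resp.\ positivity) of $R^{\lambda}H(x,R)$ (resp.\ $R^{\lambda}h(x,R)$) is only available along a subsequence of radii, so the time grid must be chosen from the radii via $t_k=(R_k/\varepsilon)^\lambda$ rather than conversely — which you do correctly.
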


Compare in this context also Fristedt \cite{fristedt}, Taylor \cite{taylor}, Schilling \cite{schillingdiss} and the monograph \cite{levymatters3}.

\section{The Connection to Stochastic Differential Equations}

From our modern point-of-view it is interesting, that It\^o, the father of the theory of stochastic integration, wanted to use stochastic differential equations in order to study Markov processes from a new perspective.  For details we refer to \cite{strookvaradhan} and \cite{jarrowprotter}.  It\^o himself describes it as follows (cf. \cite{itoforeword}): `I noticed that a Markovian particle would perform a time homogeneous differential process for infinitesimal future at every instant, and arrived at the notion of a stochastic differential equation governing the paths of a Markov process that could be formulated in terms of the differentials of a single differential process.'

Modern textbooks on stochastic integration (cf.  \cite{cohenell},\cite{protter}) sometimes do not even mention the term `Markov process' in the first place. Although, this class of processes motivated the development of the theory of stochastic differentials and the corresponding equations, both topics are nowadays taught separately.  To make a long (and interesting) story short, coming from $dW_t$ and $dt$ as the only allowed differentials the integrators became more and more general: martingales, local martingales + Lebesgue measure and finally semimartingales (cf. \cite{jarrowprotter}).  While Kunita and Watanabe \cite{kunitawatanabe} still used the L\'evy system, which was rooted in the theory of Markov processes, in order to describe the jump part in the change-of-variable formula, Meyer (\cite{meyer1} and \cite{meyer2}) left Markov processes and L\'evy systems behind.  This was the point, where what is now known as It\^o's formula left the world of Markov processes and found its place in the correct field of research. Jointly with Dol\'eans-Dade, Meyer coined the modern term semimartingale in \cite{ddmeyer}.  In order to teach people It\^o's formula (see e.g. \cite{protter}) no knowledge on Markov processes is needed. It would have been possible to develop the whole theory of semimartingales,  stochastic integration and SDEs without having the Markov property in mind. 

Dellacherie and Bichteller proved in the late 1970's that semimartingales are the biggest class of stochastic processes with respect to which a reasonable stochastic integration of adapted c\`agl\`ad processes is possible.  Having this characterization, Protter used this approach to actually define semimartingales in \cite{protter} and was able to derive a great amount of results in the theory of stochastic integration relying on that definition. 

Being able to integrate with respect to (vectors of) semimartingales, it is natural to consider SDEs of the form
\begin{align} \label{sde}
X_t=x+\int_0^t f(X_s) \ dY_s
\end{align}
where $Y$ is a semimartingale. For simplicity we only analyze the one-dimensional setting. Let us consider the case, where $f$ is only a function and not a functional of the paths up to time $s$. In this case, it could be shown that the solution - if a unique solution exists - is a Markov process (cf.  \cite{protter} Section V.6)  on a slightly extended probability space, if the driver $Y$ is a L\'evy process.
In \cite{jacodprotter} Jacod and Protter have shown that (if the coefficient $f$  is never zero), this condition is also necessary: if the solution is a Markov process, the driver has to be L\'evy. Since L\'evy process are only a small subclass of general semimartingales, this shows that the canonical case of the solution of an SDE is a process, that is \emph{not} Markovian.  Like the other concepts we have studied above, the theory of SDEs was stimulated by Markov processes, but today the concepts belong to the world of semimartingales. 

Let us shortly comment on the connection between SDEs and the probabilistic symbol of Section \ref{sec:symbol}: if the driver $Y$ in \eqref{sde} is a L\'evy process with characteristic exponent $\psi(\xi)$, and $f$ is so regular that the SDE admits a unique solution, then the symbol of the solution has the nice form: $p(x,\xi)= \psi(f(x)'\xi)$ (cf.  \cite{sdesymbol},  Theorem 3.1).  The condition \eqref{growth} in the theory of the symbol is equivalent to  bounded coefficients in the context of L\'evy driven SDEs (cf. \cite{schilling98} Lemma 2.1 and \cite{jacodshir} Section III.2c).

Metivier deals with different types of (Banach-space valued) SDEs in Chapter 8 of \cite{Meti1982}. Here, the reader also finds some historic remarks.  SDEs driven by semimartingales are treated in \cite{protter} Chapter V in an accessible way.  Compare in this context also \cite{vierleute} Section 8.

\section[Further Connections]{Further Connections Between Markov Processes and Semimartingales}


Lastly, we would like to highlight some other connections between Markov processes and semimartingales. These are not the focus of the present article: they all have in common that it is not ideas and concepts that are transferred from Markov processes to semimartingales. Rather, the connection between these two classes of processes is explored. 

Some authors consider stochastic differential equations with non-Lipschitz coefficients, which are still continuous.  In certain situations it can be shown that solutions of these SDEs exist, but they are not unique. Among the resulting family of solution processes, there might be Markovian ones. In this case one calls the procedure of finding these \emph{Markov selection}.  This problem dates back to a paper due to Girsanov \cite{girsanov} from 1962.  The systematic treatment started with \cite{krylov}. Compare in the context of stochastic partial differential equation \cite{SPDE} and for recent results on L\'evy-type processes \cite{kuehn}. 

In \cite{bentatacont} the authors tackle the following problem: given a discontinuous semimartingale, they want to match a Markov process, whose generator can be expressed in terms of the characteristics of the semimartingale. If such a mimicking process exists, the authors call it \emph{Markov projection}. Obviously, this question is closely related to our considerations above.  Unfortunately, the paper has never appeared in a peer reviewed journal. The latest version on arXiv is from 2012.  If the results of the preprint hold, they generalize a theorem due to Gy\"ongy from 1986 (cf. \cite{gyoengy}). 

In \cite{vierleute} the following question is answered (among many others): Given a Markov process $X$, for which deterministic functions $f$ is the process $f(X)$ a semimartingale? Functions of this kind are called \emph{semimartingale functions} by the authors of that groundbreaking paper.  Loosely speaking the answer for right processes is as follows: $f$ is such a function if and only if it is locally the difference of two excessive functions. For details cf.  Section 4 of \cite{vierleute} and in particular Theorem 4.6. 

In Section 7 of \cite{vierleute} Cinlar et al. deal with the question, when a Markov process is actually a semimartingale. They answer this question for general Markov processes,  as well as Hunt and It\^o processes. The key ingredient is the so called extended generator. 
Let us also mention the nice result in \cite{cinlarjacod81} that every Hunt semimartingale can be obtained as a random time change of an It\^o process.

\section*{Appendix}

Here, we present the proofs omitted in Chapter \ref{sec:chars}.  They give a nice inside into the techniques of proof used in the context of semimartingales with killing and, hence, have some value in their own right. 

\begin{proof}[Proof of Theorem \ref{thm:goodversion}]
Since $B',C',\nu'$ are the characteristics of the classical semimartingale $\tX^{\alpha_n -}$ on $\ls 0, \alpha_n \ls$ there exists a predictable process $F^{'n} \in \A_{loc}$ such that (\ref{b})-(\ref{K}) hold true by Proposition II.2.9 of \cite{jacodshir}.  Since $F^{'n}=F^{'n+1}$ on $\ls 0 , \alpha_n \ls$, (\ref{b})-(\ref{K}) hold for $F':= \lim_{n\to \infty} F^{'n}$.
By defining the process
\begin{equation}\label{F}
F:= F'+ A
\end{equation}
we observe that $dF'\ll dF$ and $dA \ll dF$ since $F'$ and $A$ are increasing.  Moreover, Theorem 3.13 of \cite{jacodshir}  provides the existence of predictable processes $f'$ and $a$ such that 
\begin{align*}
A &= a \Si F, \text{ and} \\
F' &= f' \Si F.
\end{align*}
The associativity of the stochastic integral then provides
\begin{align*}
A&= a \Si F\\
B^i &= (b^if') \Si F  \\
C^{ij} &=(c^{ij}f')\Si F \\
\nu (\omega;dt,dx)&=dF_t(\omega) f'_t(\omega)K_{\omega,t}(dx),
\end{align*}
The properties $(i.)-(2.)$ follow by Proposition II.2.9 of \cite{jacodshir} on $\ls 0, \zi \wedge \zd \ls$. 
\end{proof}

\begin{proof}[Proof of Lemma \ref{lem: ADDLeft}]
At first, we want to prove the additivity of the characteristics:
Let $(\mathcal{M}_t)_{t \geq 0}$ be a Markov filtration, and let $Y$ be additive.  Then $Y$ is additive on $\ls 0 , \alpha_n \ls$ for all $n \in \N$. Thus,  the processes $B_n, C_n$ and $\nu_n$ are additive on said stochastic intervals by Theorem 6.24 (iv) of \cite{vierleute}.  The additivity of $B, C$ and $\nu$ follows. It remains to show that the fourth characteristic, namely $A$, is additive.  Therefore,  we consider the one-point jump process 
$1_{\ls \zd, \infty \ls}$. A simple computations shows that 
\begin{align*}
\Theta_s 1_{\ls \zi, \infty \ls}(\omega,t) &= 1_{\ls s+\zd\circ \theta_s, \infty \ls }(\omega,t)\\
&= 1_{(0,t-s]}(\zd(\theta_s(\omega)).
\end{align*}
Equally easy we state that 
\begin{align*}
1_{\ls \zd \infty \ls }(\omega,t) - 1_{\ls \zd \infty \ls }(\omega,t\wedge s)&= 1_{(s,t]}(\zd(\omega)).
\end{align*}
Since $Y$ is additive it is known that $\zd= \zd\circ \theta_s -s $ for $s < \zd$. Thus, $(A,B,C,\nu)$ are additive.\\
Let us now state the proof of the statement: since $Y$ is quasi-left continuous $B$ is continuous (see I.4.36 and II.2.9 (i.) in \cite{jacodshir}) and $\nu$ is $\Pro^x$ quasi-left continuous (see I.2.35 in \cite{jacodshir}).  $C$ is continuous by definition. Moreover, the quasi left-continuity of $Y$ implies that $\zd$ is totally inaccessible, and, therefore, the fourth characteristic $A$ is continuous.  We are now able to apply Theorem 6.19 of \cite{vierleute}.  Thus, there exists a continuous process $F' \in \V_{ad}^+$ with respect to the strong Markov filtration $(\M_t)_{t \geq 0}$ and a positive transition kernel $\tilde{K}'$ from $(\R^d, \mathcal{B}(\R^d))$ into $(E, \mathcal{E})$, such that 
$$
\nu(\omega; dt;dx) = dF'_t(\omega) \tilde{K}'(X_t(\omega),dx).
$$
Now, let 
$$
F:= F' + \sum_{i\leq d} \Var(B^i) + \sum_{i,j\leq d} \Var(C^{i,j})+ A.
$$
The so defined process $F$ belongs to $\V^+$  and is continuous, and additive.  Moreover,  we have:
$$
dF' \ll dF ,  \; dB^i \ll dF,  \; dC^{ij} \ll dF,  \; dA \ll dF.
$$
Theorem 3.55 of \cite{vierleute} provides the existence of $\B^d$-measurable functions $a,b,c$ such that 
\begin{align*}
B&=b(X) \Si F,\\
C&=c(X) \Si F, \\
A&=a(X) \Si F.
\end{align*}
The theorem follows analogously to the proof of 6.25 in \cite{vierleute}.
\end{proof}

\begin{proof}[Proof of Proposition \ref{ÄquiSem}]
$(a.) \Rightarrow (c.)$: Let $\tilde{X}$ be a generalized semimartingale with characteristics $(A,B,C,\nu)$. We defined $H^n$ to be $X^n+\mathbf{1}\cdot 1_{\ls \zd, \infty\ls}$ where $X^n:= \tilde{X}^{\alpha_n-}$. Thus, the process $H^n$ is a semimartingale since it possesses the decomposition 
$$
H^n=X_0+M^n+ \left(A^n+\mathbf{1}\cdot 1_{\ls \zd, \infty\ls}\right)
$$
where $M^n \in \Lo$ and $A^n+\mathbf{1}\cdot 1_{\ls \zd, \infty\ls} \in \V$ for every $n\in \N$.\\
Let now $h$ be the truncation function belonging to the semimartingale $X^n$ for every $n\in \N$. In order to evaluate the characteristics of $H^n$, we observe that
\begin{align*}
\dot{H^n}(h)_t &:= \sum_{s\leq t} (\Delta H^n_s-h(\Delta H^n_s))\\
&= \sum_{s \leq t < \alpha_n} (\Delta H^n_s-h(\Delta H^n_s))+(\mathbf{1}-h(\mathbf{1}))1_{\ls \zd,\infty \ls} (t)\\
&= \sum_{s\leq t} (\Delta X^n_s-h(\Delta X^n_s))+(\mathbf{1}-h(\mathbf{1}))1_{\ls \zd,\infty \ls} (t)\\
&=\dot{X}^n(h)_t+(\mathbf{1}-h(\mathbf{1}))1_{\ls \zd,\infty \ls} (t),
\end{align*}
and
\begin{align*}
H^n(h)_t&:= H^n_t -\dot{\oXn}(h)_t\\
&= X^n_t+\mathbf{1}\cdot1_{\ls \zd, \infty\ls}(t) - \left(\dot{X}^n(h)_t+(\mathbf{1}-h(\mathbf{1}))1_{\ls \zd,\infty \ls} (t)\right)\\
&=X^n_t-\dot{X}^n(h)_t+h(\mathbf{1})1_{\ls \zd,\infty \ls} (t)\\
&=X^n(h)_t+h(\mathbf{1})1_{\ls \zd,\infty \ls} (t).
\end{align*}
We already know that $X^n(h)$ is a special semimartingale. Thus,  the previous equation shows that $H^n(h)$ also is a special semimartingale with canonical representation
\begin{align*}
H^n(h) &= X^n(h)+h(\mathbf{1})1_{\ls \zd,\infty \ls}\\
&= X_0 + M^n(h)+\left( B^n(h)+h(\mathbf{1})1_{\ls \zd,\infty \ls} \right)
\end{align*}
where  $M^n(h)+B^n(h)+X_0$ is the canonical representation of $X^n(h)$.  This decomposition allows to determine the characteristics $\overline{B^n}$ and $\overline{C^n}$ of $H^n$:
\begin{align*}
\overline{B^n}&:= B^n(h)+h(\mathbf{1})1_{\ls \zd,\infty \ls},\\
\overline{C^n}&:= (\overline{C^n}^{ij})_{i,j \leq d} = \left(\langle (H^n)^{i,c},(H^n)^{j,c} \rangle\right)_{i,j \leq d} = \left(\langle (X^n)^{i,c},(X^n)^{j,c} \rangle\right)_{i,j \leq d} = C^n.
\end{align*}
Analogously to the proof of Theorem II.2.42 of \cite{jacodshir} we apply It\^{o}'s formula to $H^n$ and obtain 
\begin{align*}
f(H^n_t)-f(X_0) &= \sum_{i=1}^d \left(\frac{\partial}{\partial x^i}f(H^n_{-}) \right) \Si \overline{M^n}^i_{t} + \sum_{i=1}^d \left(\frac{\partial}{\partial x^i}f(H^n_{-}) \right) \Si \overline{B^n}^i_{t} \\
&\quad +\frac{1}{2}  \sum_{i,j=1}^d \left(\frac{\partial^2}{\partial x^i \partial x^j}f(H^n_{-}) \right) \Si  \overline{C^n}^{ij}_t \\
& \quad +\sum_{s \leq t} \left[ f(H^n_s)-f(H^n_{s-})- \sum_{i=1}^d \left(\frac{\partial}{\partial x^i}f(H^n_{s-}) \right) h(\left(H^n_s\right)^i )\right]\\
&=\sum_{i=1}^d \left(\frac{\partial}{\partial x^i}f(H^n_{-}) \right) \Si \overline{M^n}^i_{t} + \sum_{i=1}^d \left(\frac{\partial}{\partial x^i}f(H^n_{-}) \right) \Si (B^n)^i_{t}\\
& \quad + \sum_{i=1}^d \left(\frac{\partial}{\partial x^i}f(H^n_{-}) \right) \Si \left(h(\mathbf{1})1_{\ls \zd, \infty \ls}\right) \\
& \quad +\frac{1}{2}  \sum_{i,j=1}^d \left(\frac{\partial^2}{\partial x^i \partial x^j}f(H^n_{-}) \right) \Si  \overline{C^n}^{ij}_t\\
& \quad +\sum_{s \leq t} \left[ f(X^n_s)-f(X^n_{s-})- \sum_{i=1}^d \left(\frac{\partial}{\partial x^i}f(X^n_{s-}) \right) h((X^n_s)^i) \right]\\
& \quad +\Delta f(H^n_{\zd})1_{\ls \zd,\infty \ls}-\sum_{i=1}^d \frac{\partial}{\partial x^i}f(H^n_{\zd-})h(\mathbf{1})1_{\ls \zd,\infty \ls} \\
&=\sum_{i=1}^d \left(\frac{\partial}{\partial x^i}f(H^n_{-}) \right) \Si (M^n)^i_{t} + \sum_{i=1}^d \left(\frac{\partial}{\partial x^i}f(H^n_{-}) \right) \Si (B^n)^i_{t}\\
& \quad + \sum_{i=1}^d \left(\frac{\partial}{\partial x^i}f(H^n_{\zd-}) \right)  h(\mathbf{1})1_{\ls \zd, \infty \ls} \\
& \quad +\frac{1}{2}  \sum_{i,j=1}^d \left(\frac{\partial^2}{\partial x^i \partial x^j}f(H^n_{-}) \right) \Si  (C^n)^{ij}_t\\
& \quad +\sum_{s \leq t} \left[ f(X^n_s)-f(X^n_{s-})- \sum_{i=1}^d \left(\frac{\partial}{\partial x^i}f(X^n_{s-}) \right) h((X^n_s)^i) \right]\\
& \quad +\Delta f(H^n_{\zd})1_{\ls \zd,\infty \ls}-\sum_{i=1}^d \frac{\partial}{\partial x^i}f(H^n_{\zd-})h(\mathbf{1})1_{\ls \zd,\infty \ls} 
\intertext{We now use the fact,  that $B^n_t, M^n_t$ and $C^n_t$ are constant for $t \geq \alpha_n(\omega)$:}
&=\sum_{i=1}^d \left(\frac{\partial}{\partial x^i}f(X^n_{-}) \right) \Si (M^n)^i_{t} + \sum_{i=1}^d \left(\frac{\partial}{\partial x^i}f(X^n_{-}) \right) \Si (B^n)^i_{t}\\
& \quad +\frac{1}{2}  \sum_{i,j=1}^d \left(\frac{\partial^2}{\partial x^i \partial x^j}f(X^n_{-}) \right) \Si  (C^n)^{i,j}_t\\
& \quad +\sum_{s \leq t} \left[ f(X^n_s)-f(X^n_{s-})- \sum_{i=1}^d \left(\frac{\partial}{\partial x^i}f(X^n_{s-}) \right) h((X^n_s)^i) \right]\\
& \quad +\Delta f(H^n_{\zd})1_{\ls \zd,\infty \ls}\\
&=\sum_{i=1}^d \left(\frac{\partial}{\partial x^i}f(X^n_{-}) \right) \Si (M^n)^i_{t} + \sum_{i=1}^d \left(\frac{\partial}{\partial x^i}f(X^n_{-}) \right) \Si (B^n)^i_{t}\\
& \quad +\frac{1}{2}  \sum_{i,j=1}^d \left(\frac{\partial^2}{\partial x^i \partial x^j}f(X^n_{-}) \right) \Si  (C^n)^{i,j}_t\\
& \quad +\left[ f(X^n_{-}+x)-f(X^n_{-})- \sum_{i=1}^d \left(\frac{\partial}{\partial x^i}f(X^n_{-}) \right) h((X^n))^i \right]\ast \mu^{X^n}\\
& \quad +\Delta f(H^n)\Si 1_{\ls \zd,\infty \ls}
\end{align*}
The above equation provides that
\begin{align*}
&f(H^n)-f(X_0)-\sum_{i=1}^d \left(\frac{\partial}{\partial x^i}f(X^n_{-}) \right) \Si (B^n)^i\\
&-\frac{1}{2}  \sum_{i,j=1}^d \left(\frac{\partial^2}{\partial x^i \partial x^j}f(X^n_{-}) \right) \Si  (C^n)^{ij}\\
&-\left[ f(X^n_{-}+x)-f(X^n_{-})- \sum_{i=1}^d \left(\frac{\partial}{\partial x^i}f(X^n_{-}) \right) h((X^n))^i \right]\ast \nu^{n} -\Delta f(H^n)\Si A\\
= &\sum_{i=1}^d \left(\frac{\partial}{\partial x^i}f(X^n_{-}) \right) \Si (M^n)^i\\
&+\left[ f(X^n_{-}+x)-f(X^n_{-})- \sum_{i=1}^d \left(\frac{\partial}{\partial x^i}f(X^n_{-}) \right) h((X^n))^i \right]\ast \left(\mu^{X^n}-\nu^n \right)\\
&+\Delta f(H^n)\Si (1_{\ls \zd,\infty \ls}-A)_t
\end{align*}
Since the right hand side belongs to $\M_{loc}$ the statement follows.\\
$(c.)\Rightarrow (b.)$\\
Let $f:\R^d \to \C; x \mapsto e^{iu'x}$ with $u \in \R^d$.  Obviously,  $f$ is bounded, and belongs to $\mathcal{C}^2(\R^d)$,  and we have 
\begin{align*}
\frac{\partial }{\partial x^j} f(x) = iu^jf(x), \text{ and }\\
\frac{\partial^2 }{\partial x^j\partial x^k} f(x) = -u^ku^jf(x).
\end{align*}
If we compute the expression in $(c.)$ for the function $f$ we obtain that
\begin{align*}
&e^{iu'H^n_t}-\sum_{j=1}^d(iu^je^{iu'X^n_{t-}})\Si (B^n)^j_t- \frac{1}{2}\sum_{j,k=1}^d u^ju^k (e^{iu'X^n_{t-}}) \Si (C^n)_t^{jk}  \\
&- \int_{[0,t]\times \R^d} e^{iu'X^n_{-}}(e^{iu'x}-1-iu'h(x))\ \nu^n(ds\times dx)\\
&-\Delta e^{iu'H^n}\Si A_t\\
= \;&e^{iu'H^n_t}-\sum_{j=1}^d(iu^je^{iu'X^n_{t-}})\Si (B^n)^j_t- \frac{1}{2}\sum_{j,k=1}^d u^ju^k (e^{iu'X^n_{t-}}) \Si (C^n)_t^{jk}  \\
&- e^{iu'X^n_{t-}}\Si \int_{\R^d} (e^{iu'x}-1-iu'h(x))\ \nu^n([0,t]\times dx)\\
&- \left( e^{iu'H^n}-e^{iu'H^n_-}\right) \Si A_t\\
= \;&e^{iu'H^n_t}-\sum_{j=1}^d(iu^je^{iu'X^n_{t-}})\Si (B^n)^j_t- \frac{1}{2}\sum_{j,k=1}^d u^ju^k (e^{iu'X^n_{t-}}) \Si (C^n)_t^{jk}  \\
&- e^{iu'X^n_{t-}}\Si \int_{\R^d} (e^{iu'x}-1-iu'h(x))\ \nu^n([0,t]\times dx))\\
&- \left( e^{iu'H^n}-e^{iu'X^n_-}e^{iu'\left(\mathbf{1}\cdot 1_{\ls \zd, \infty \ls}\right)} \right) \Si A_t\\
= \; & e^{iu'H^n_t}- e^{iu'H^n}\Si A_t- e^{iu'X^n_-} \Si \left(e^{iu'\left(\mathbf{1}\cdot 1_{\ls \zd, \infty \ls}\right)}\Si A_t  -iu'B^n_t-\frac{1}{2}u'C^n_t u \right.\\
&\left.+ \int_{\R^d} (e^{iu'x}-1-iu'h(x))\ \nu^n([0,t]\times dx)\right)
\end{align*}
is a local martingale for every $n\in \N$.\\
$(b.) \Rightarrow (a.)$\\
Let now $e^{iu'H^n_t}- e^{iu'H^n}\Si A_t- e^{iu'X^n_-} \Si L^n(u)$ be a local martingale for every $n \in \N$ and arbitrary $u\in \R^d$.  The process
\begin{align*}
&\left( e^{iu'H^n_t}- e^{iu'H^n}\Si A_t- e^{iu'X^n_-} \Si L^n(u) \right)^{\alpha_n-} &\\
= &e^{iu'X^n}-e^{iu'X^n_-}\Si \left( iu'B^n_t-\frac{1}{2}u'C^n_tu+\int(e^{iu'x}-1-iu'h(x))\ \nu^n([0,t]\times dx) \right)
\end{align*}
is a local martingale.  Application of Theorem II.2.42 of \cite{jacodshir} provides that $X^n$ is a semimartingale with characteristics $(B^n, C^n,\nu^n)$.  Thus,  the generalized semimartingale $\tilde{X}$ possess the characteristics $(B,C,\nu)$.  It remains to show that the process $A$ is the fourth characteristic of $\tilde{X}$. Let therefore be $A'$ the fourth characteristic of $\tilde{X}$. We already know that implication $(a.) \Rightarrow (b.)$ holds.  Let $L'^n(u)$ be the process mentioned in Notation \ref{NotL} with $(A',B^n, C^n,\nu^n)$ such that
$$
e^{iu'H^n_t}- e^{iu'H^n}\Si A'_t- e^{iu'X^n_-} \Si L'^n(u)
$$ 
is a local martingale for every $n \in \N$.
It follows that 
\begin{align*}
&e^{iu'H^n_t}- e^{iu'H^n}\Si A_t- e^{iu'X^n_-} \Si L^n(u) - \left(e^{iu'H^n_t}- e^{iu'H^n}\Si A'_t- e^{iu'X^n_-} \Si L'^n(u)\right)\\
&= \left(e^{iu'H^n}+e^{iu'(X^n_- +\mathbf{1} \cdot 1_{\ls \zd, \infty \ls} )}\right) \Si A' - \left(e^{iu'H^n}+e^{iu'(X^n_-+\mathbf{1} \cdot 1_{\ls \zd, \infty \ls} )}\right) \Si A 
\end{align*}
belongs to $\M_{loc}$ for all $u \in \bbr^d$. Therefore,  $A'-A$ also belongs to $\M_{loc}$. Thus, $A$ is the fourth characteristics of $\tilde{X}$. 
\end{proof}
\section*{Acknowledgment}

We would like to thank Ph. Protter who brought our attention to the interesting article \cite{jarrowprotter}.  AS would like to thank the DFG (German Science Foundation, grant No. SCHN 1231/2-1).

\bibliographystyle{unsrt} 

\end{document}